\def\namedlabel#1#2{\begingroup
 #2%
 \def\@currentlabel{#2}%
 \phantomsection\label{#1}\endgroup
}
\renewcommand{\PrintDOI}[1]{\href{http://dx.doi.org/\detokenize{#1}}{doi: \detokenize{#1}}%
	\IfEmptyBibField{pages}{, (to appear in print)}{}}
\theoremstyle{plain}
\newtheorem*{theorem*}{Theorem}
\newtheorem*{thmex*}{Theorem~\ref{example}}
\newtheorem*{thmasymp*}{Theorem~\ref{thmAsymp}}
\newtheorem{theorem}{Theorem}[section]
\newtheorem{proposition}[theorem]{Proposition}
\newtheorem{remark}[theorem]{Remark}
\newtheorem{example}[theorem]{Example}
\theoremstyle{definition}
\newtheorem{definition}[theorem]{Definition}
\newcommand{\ben}{\begin{enumerate}}
\newcommand{\een}{\end{enumerate}}
\newcommand{\ed}{\end{document}}
\definecolor{rrr}{rgb}{.9,0,.1}
\definecolor{rr}{rgb}{.8,0,.3}
\title[Generating Set of Reidemeister Moves of Oriented Stuck Links and Quandles]{RNA foldings and Stuck Knots}
\author[J. Ceniceros]{Jose Ceniceros}
\address{Hamilton College, Clinton, NY, USA}
\email{jcenicer@hamilton.edu}
\author[M. Elhamadi]{Mohamed Elhamdadi}
\address{University of South Florida, Tampa, Florida, USA}
\email{emohamed@usf.edu}
\author[J. Komissar]{Josef Komissar}
\address{Hamilton College, Clinton, NY, USA}
\email{jkomissa@hamilton.edu}
\author[H. Lahrani]{Hitakshi Lahrani}
\address{University of South Florida, Tampa, Florida, USA}
\email{lahrani@usf.edu}
\begin{document}

\maketitle

\begin{abstract}
We study RNA foldings and investigate their topology using a combination of knot theory and embedded rigid vertex graphs. Knot theory has been helpful in modeling biomolecules, but classical knots place emphasis on a biomolecule's entanglement while ignoring their intrachain interactions. We remedy this by using stuck knots and links, which provide a way to emphasize both their entanglement and intrachain interactions. We first give a generating set of the oriented stuck Reidemeister moves for oriented stuck links. We then introduce an algebraic structure to axiomatize the oriented stuck Reidemeister moves. Using this algebraic structure, we define a coloring counting invariant of stuck links and provide explicit computations of the invariant. Lastly, we compute the counting invariant for arc diagrams of RNA foldings through the use of stuck link diagrams.

\end{abstract}

\section{Introduction}\label{intro}

We introduce an algebraic structure in order to study RNA folding. These structures will be used to investigate and classify the topology of RNA foldings, and the classification is highly relevant to molecular biology. The structures we propose are related to stuck links introduced in \cite{B}. The main motivating factor for utilizing stuck knots and links is their application to modeling biomolecules. Classical knot theory has been used to model biomolecules with limited success.  For example, in \cite{SRMOS}, an overwhelming number of the proteins in the Protein Data Bank were classified into one specific isotopy class, the unknot. One potential reason for the limited success of classical knot theory is that the emphasis is placed on the knottiness of the biomolecule while completely ignoring any intrachain interactions or bonds as noted in \cite{MWT}. In order to address these limitations graphical models of folded molecules lead to the study of embedded rigid vertex graphs in general and singular knots \cite{V} in particular. We will consider a generalization of singular links called stuck links since they are uniquely equipped to model the entanglement and intrachain interactions of a biomolecule as described in \cite{B}.

Specifically, this research aims to develop tools to classify arc diagrams of RNA foldings by developing an algebraic structure and invariant of oriented stuck links. In order to define our invariant, we first introduce a generating set of the Reidemeister type moves for oriented stuck links. Using the generating set of stuck Reidemeister moves and motivated by the effectiveness of quandles and singqandles in distinguishing oriented knots and singular knots and links, we define \emph{stuquandles}. We then use stuquandles to define a computable invariant of oriented stuck knots and links. 

 Equipped with the stuquandle structure and counting invariant we will shift our focus to the relationship between stuck links and RNA folding. In \cite{B}, a transformation was introduced between arc diagrams of RNA foldings defined in \cite{KM} and stuck links. Arc diagrams of RNA foldings have been studied, and polynomial invariants have been defined in order to classify them in \cites{KM,B, TLKL}. In contrast to the approaches previously taken, which depend on skein relations, we define an invariant that is easily computable with {\tt Mathematica}, {\tt Maple}, and {\tt Python}. Our invariant of RNA foldings is a consequence of the stuquandle structure and stuquandle counting invariant of stuck links. Furthermore, we show that the stuquandle counting invariant is a computable and effective invariant of RNA foldings.

The article is organized as follows. In Section~\ref{RSK}, we review the basics of the theory of stuck knots and links as introduced in \cite{B}. In Section~\ref{arcdiagrams}, we review the relationship between arc diagrams of RNA foldings and stuck links. In Section~\ref{GenSet}, we introduce a generating set of oriented Reidemeister moves for oriented stuck links. In Section~\ref{stuckq}, we introduce the notion of an oriented stuquandle and provide examples. In Section~\ref{computations}, we provide applications of oriented stuquandles to distinguish oriented stuck links. Lastly, in Section~\ref{app}, we apply oriented stuquandles and stuck knots and links to distinguish arc diagrams of RNA folding.

\section{Review of Stuck Knots and Links}\label{RSK}

In this article, we will follow the definitions and conventions of stuck knots introduced 
in \cite{B}. 
The theory of stuck knots can be thought of as a generalization of the theory of singular knots in the sense that a diagram of a stuck knot may contain classical crossings and stuck crossings. A stuck crossing is a singular crossing with additional under and over information. See Figure \ref{SX} for a singular crossing and its representation in a singular link diagram. See Figure \ref{StuckX} for the two possible ways a stuck crossing may be stuck along with their representations in an oriented stuck link diagram 

\begin{figure}[ht]
    \centering
    \includegraphics[scale=.08]{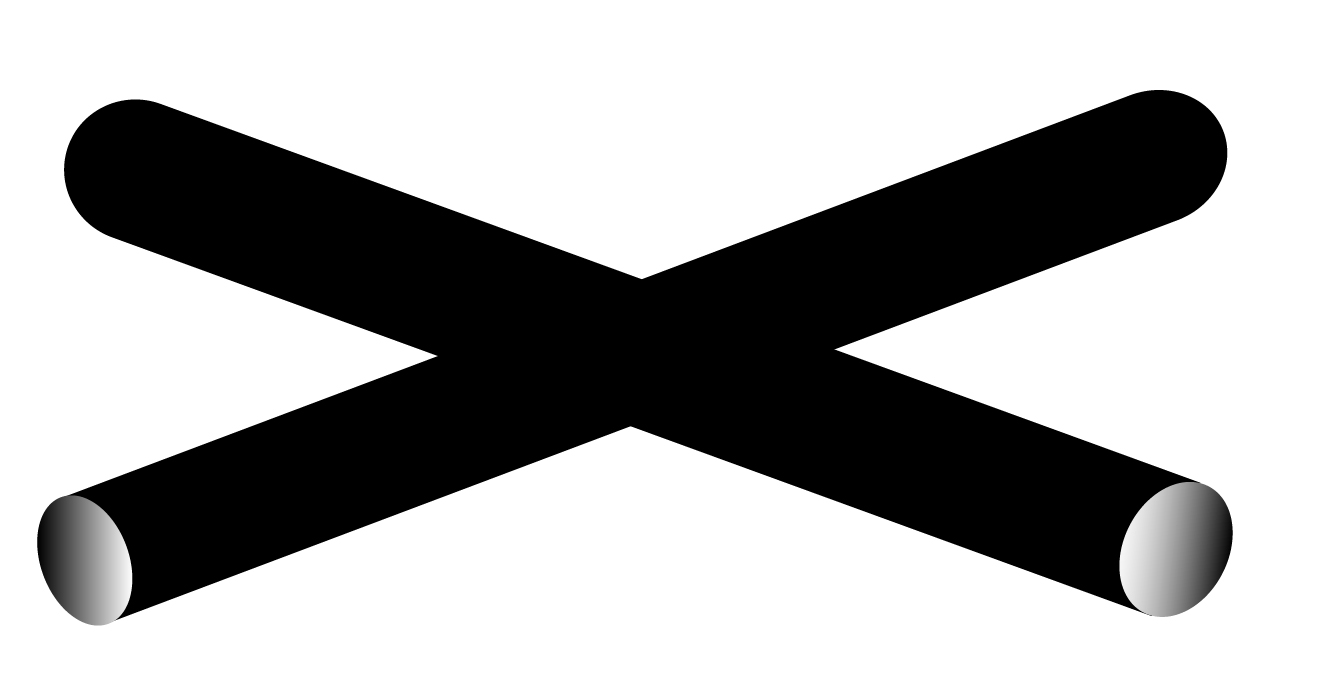}\hspace{2cm}
    \begin{tikzpicture}[use Hobby shortcut, scale=.9]
\begin{knot}[
  consider self intersections = true,
  clip width=5,
  flip crossing/.list={1,5,6,7,8}
]
\draw (-6,1)..(-4,-1);
\draw (-4,1)..(-6,-1);
\end{knot}
\node[circle,draw=black, fill=black, inner sep=0pt,minimum size=6pt] (a) at (-5,0) {};
\end{tikzpicture}
    \caption{Singular crossing in singular link and a singular crossing in a singular diagram.}
    \label{SX}
\end{figure}

\begin{figure}
    \centering
    \includegraphics[scale=.08]{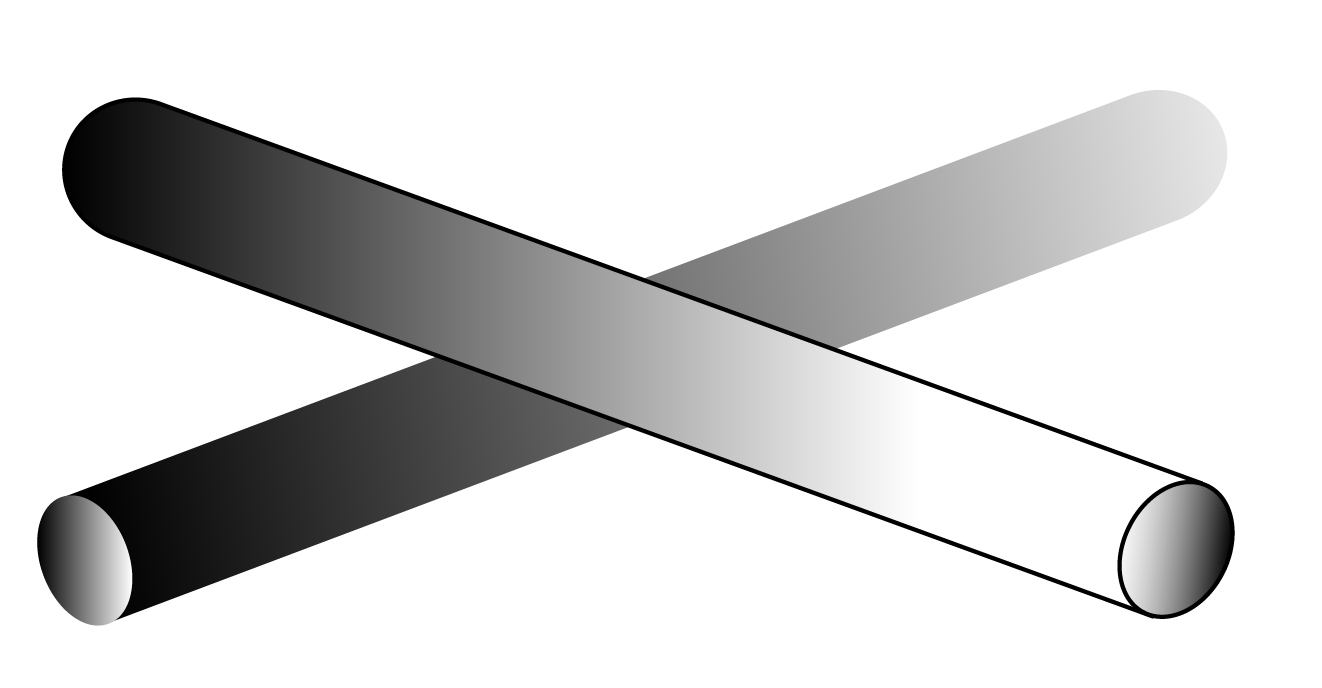}\hspace{2cm}
    \begin{tikzpicture}[use Hobby shortcut, scale=.9]
\begin{knot}[
  consider self intersections = true,
  clip width=5,
  flip crossing/.list={1,5,6,7,8}
]
\draw[stealth-] (-6,1)..(-4,-1);
\draw[-stealth] (-4,1)..(-6,-1);
\end{knot}
\node[left] at (-5,0) {\tiny$A$};

\begin{knot}[
  consider self intersections = true,
  clip width=5,
  flip crossing/.list={1,5,6,7,8}
]
\draw[stealth-] (-3,1)..(-1,-1);
\draw[-stealth] (-1,1)..(-3,-1);
\draw[line width=1.5mm,red] (-2.5,.5)..(-1.5,-.5);
\end{knot}
\end{tikzpicture}

    \includegraphics[scale=.08]{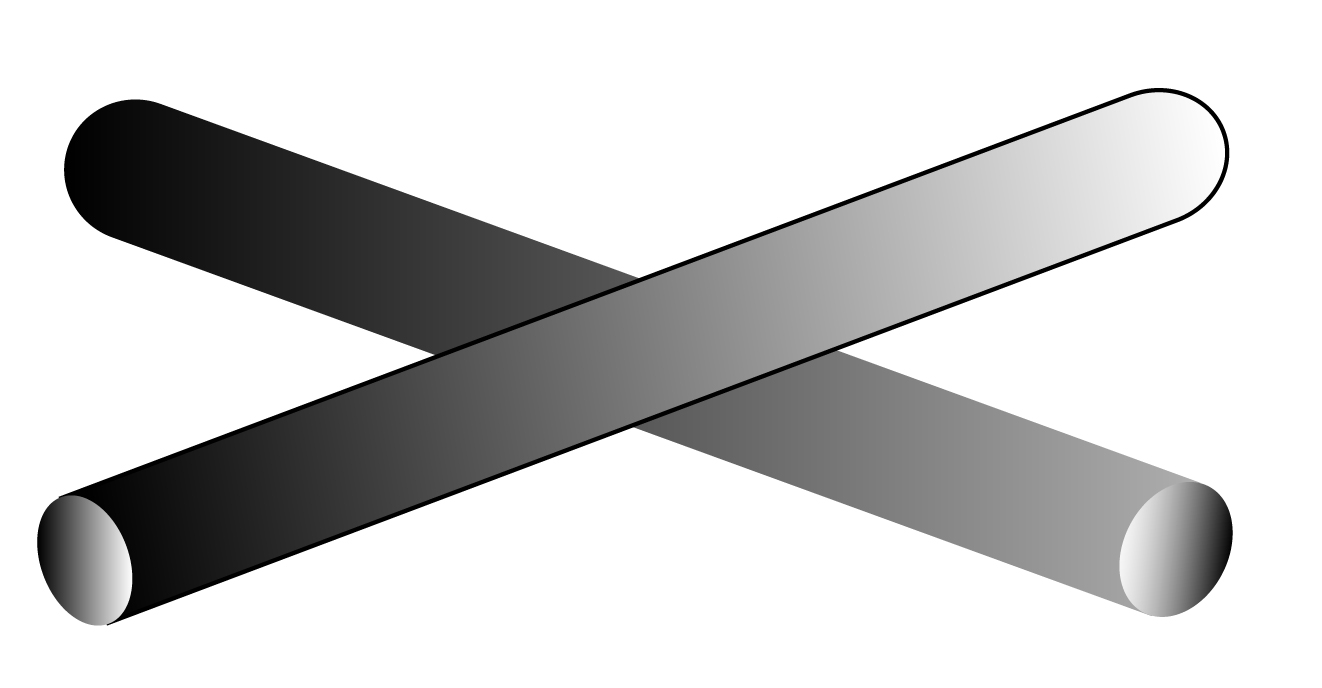}\hspace{2cm}
\begin{tikzpicture}[use Hobby shortcut, scale=.9]
\begin{knot}[
  consider self intersections = true,
  clip width=5,
  flip crossing/.list={1,5,6,7,8}
]
\draw[stealth-] (-6,1)..(-4,-1);
\draw[-stealth] (-4,1)..(-6,-1);
\end{knot}
\node[above] at (-5,0) {\tiny$A$};

\begin{knot}[
  consider self intersections = true,
  clip width=5,
  flip crossing/.list={1,5,6,7,8}
]
\draw[stealth-] (-3,1)..(-1,-1);
\draw[-stealth] (-1,1)..(-3,-1);
\draw[line width=1.5mm,red] (-2.5,-.5)..(-1.5,.5);
\end{knot}
\end{tikzpicture}
\caption{Positively stuck crossing (top) and negatively stuck crossing (bottom).}
    \label{StuckX}
\end{figure}

At a stuck crossing we will be assigning the $A$-region introduced by Kauffman in \cite{K} in order to indicate the over and under information at a stuck crossing, see middle figures in Figure \ref{StuckX}. Alternatively, we will use a thick red bar on the over strand at a stuck crossing, see right figures in Figure \ref{StuckX}. We will refer to the top crossing in Figure \ref{StuckX} as being a positive stuck crossing while the bottom crossing we will refer to as a negative stuck crossing. 

Similar to the classical case, an oriented stuck link is an equivalence class of oriented stuck link diagrams subject to oriented classical Reidemeister moves and oriented stuck Reidemeister moves. See Figures~\ref{stuck1}, \ref{stuck2} and \ref{stuck3} for the oriented stuck Reidemeister moves. 

There are some immediate invariants of stuck knots. For example, the \emph{sticking number} of a stuck knot or link is the number of stuck crossings, see \cite{B}. Additionally, there is the following variation of the sticking number.  
\begin{definition}\cite{B} \label{stickingnumber}
The \emph{signed sticking number} of an oriented stuck knot or link is the sum of signs of the stuck crossings.
\end{definition}

Examples~\ref{ex7.3} and \ref{ex8.1} show that the invariant we introduce in this article called  \emph{the stuquandle counting invariant} is stronger than the signed sticking number.

\section{RNA Folding and Stuck Knots}\label{arcdiagrams}

In this section, we will describe the connection between arc diagrams of RNA folding and stuck knots. Arc diagrams were introduced by Kauffmann and Magarshak in \cite{KM} as a combinatorial way of studying RNA folding. As described in \cite{KM}, the RNA molecule is a long chain consisting of the bases A (adenine), C (cytosine), U (uracil), and G (guanine). The pairs (A and U) and (C and G) are able of forming bonds to each other. Therefore, an RNA molecule may be represented as a linear sequence of the letters A, C, G, and U and a folding of the molecule is a possible pairing of a given sequence of bases, for a complete description and theory of arc diagrams see \cite{KM}. The following example was taken from \cite{KM}, consider the chain $\cdots CCCAAAACCCCCUUUUCCC\cdots$ with the folding given in Figure~\ref{RNAchain}.
\begin{figure}[ht]
    \centering
    \includegraphics[scale=.5]{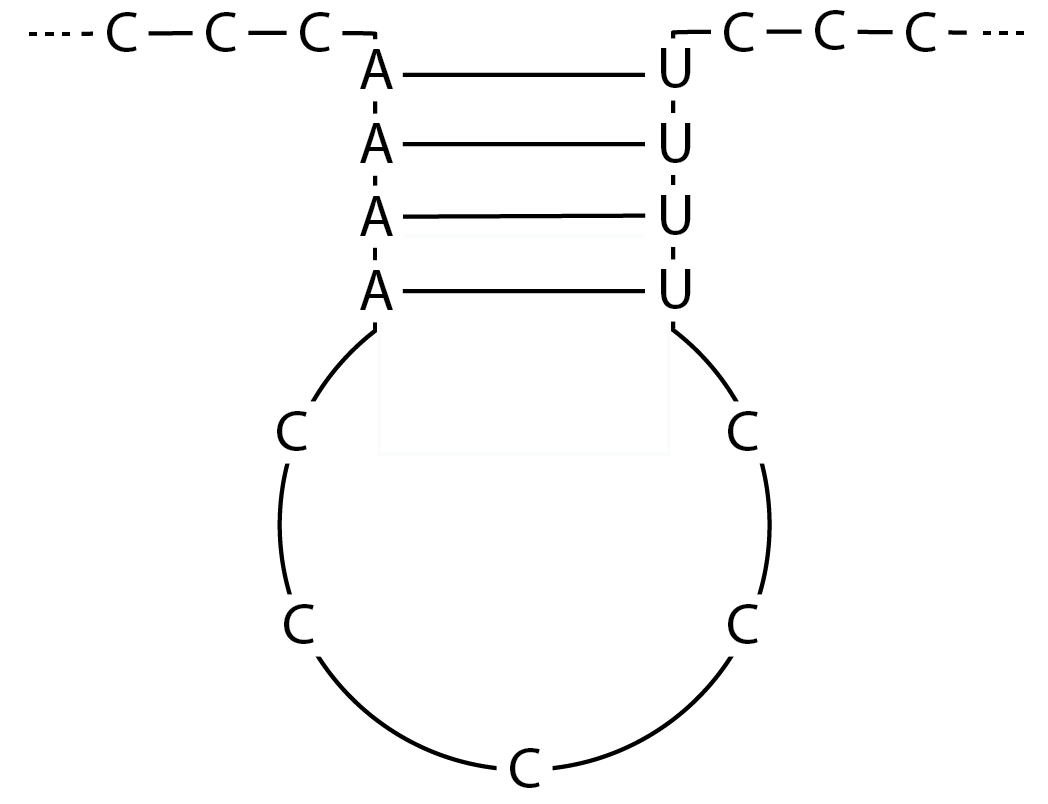}
    \caption{RNA folding}
    \label{RNAchain}
\end{figure}
\\
Kauffmann and Magarshak abstracted an RNA folding by introducing arc diagrams, see figure on the left in Figure~\ref{RNAfolding}. We will use the convention introduced in \cite{B} of a gray stripe in place of the four connecting arcs, see figure on the right of Figure~\ref{RNAfolding}.

\begin{figure}[ht]
    \centering
    \includegraphics[scale=.5]{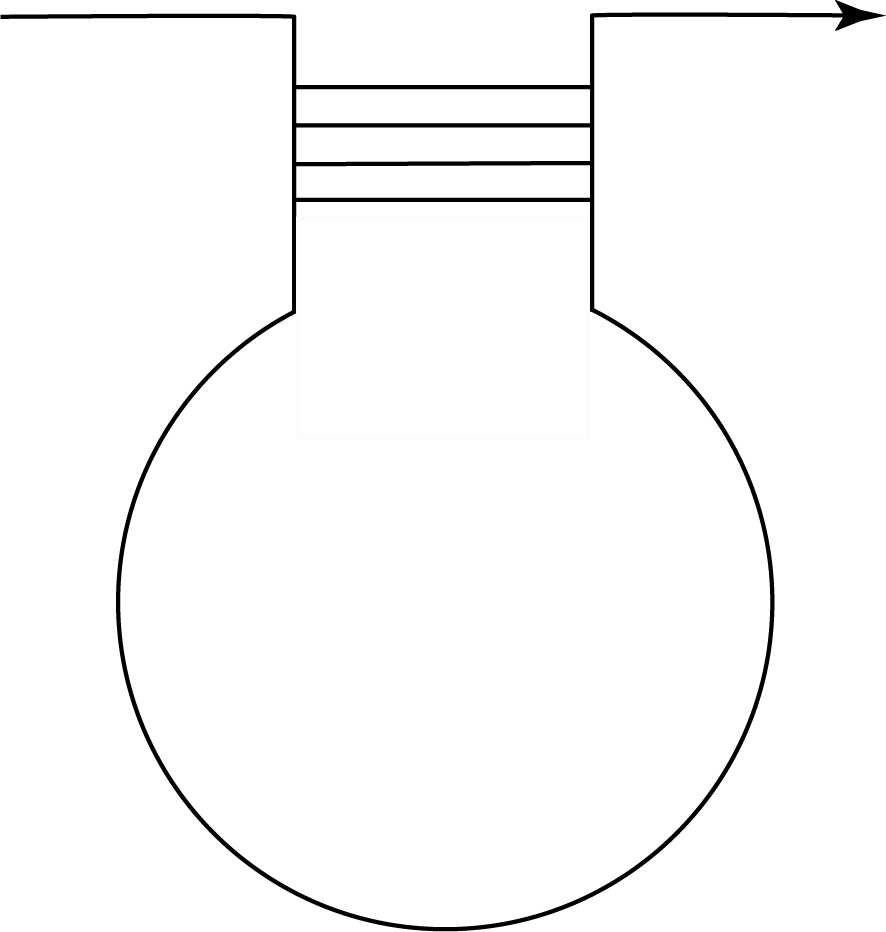}
    \hspace{1cm}
    \includegraphics[scale=.5]{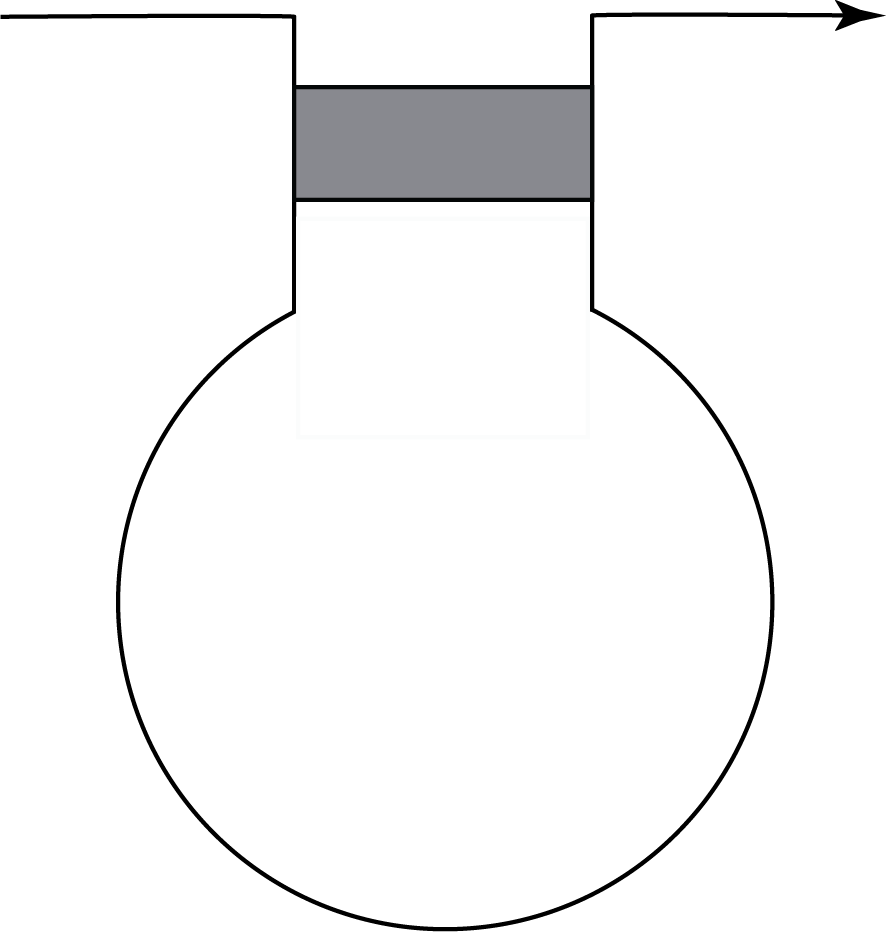}
    \caption{Arc diagram of RNA folding (left) and an arc diagram of RNA folding with gray stripe (right). }
    \label{RNAfolding}
\end{figure}
The following transformation which relates arc diagrams and stuck links was defined in \cite{B}. In order to change from an arc diagram of RNA folding to a stuck knot or link we will use the transformation, $T$, see Figure~\ref{TRNAfolding1}. 

\begin{figure}[ht]
    \centering
    \includegraphics[scale=.5]{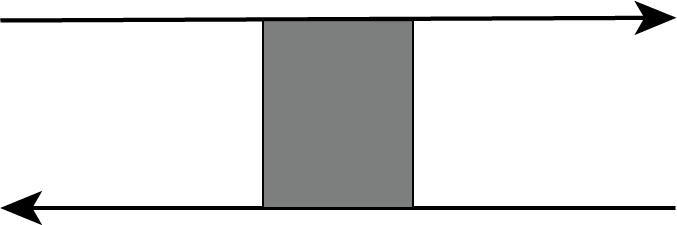}
    \hspace{.6cm}
    \begin{tikzpicture}[use Hobby shortcut, scale=.6]
\begin{knot}[
  consider self intersections = true,
  clip width=5,
  flip crossing/.list={5,6,7,8}
]
\draw (-3,1)..(-1,-1);
\strand[-stealth] (-1,-1)..(1,1);
\draw[-stealth] (-1,1)..(-3,-1);
\strand (-1,1)..(1,-1);
\draw[line width=1.5mm,red] (-2.5,.5)..(-1.5,-.5);
\draw[latex'-latex', thick](-7,-.5)..(-4,-.5);
\end{knot}
\node[above] at (-5.5,-.5) {$T$};
\end{tikzpicture}
\begin{tikzpicture}[use Hobby shortcut, scale=.6]
\begin{knot}[
  consider self intersections = true,
  clip width=5,
  flip crossing/.list={1,5,6,7,8}
]
\strand (-3,1)..(-1,-1);
\draw[-stealth] (-1,-1)..(1,1);
\strand[-stealth] (-1,1)..(-3,-1);
\draw (-1,1)..(1,-1);
\end{knot}
\draw[line width=1.5mm,red] (-.5,.5)..(.5,-.5);
\node at (-4,0) {$\sim$};
\end{tikzpicture}

    \vspace{1.5cm}
    \includegraphics[scale=.5]{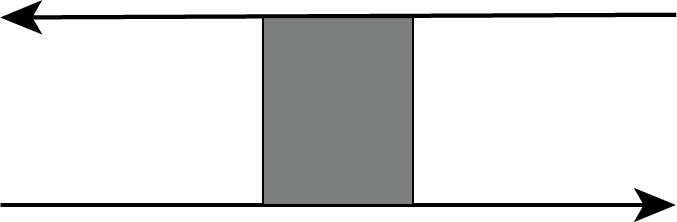}
    \hspace{.6cm}
        \begin{tikzpicture}[use Hobby shortcut, scale=.6]
\begin{knot}[
  consider self intersections = true,
  clip width=5,
  flip crossing/.list={5,6,7,8}
]
\draw[stealth-] (-3,1)..(-1,-1);
\strand(-1,-1)..(1,1);
\draw (-1,1)..(-3,-1);
\strand[-stealth] (-1,1)..(1,-1);
\draw[line width=1.5mm,red] (-2.5,.5)..(-1.5,-.5);
\draw[latex'-latex', thick](-7,-.5)..(-4,-.5);
\end{knot}
\node[above] at (-5.5,-.5) {$T$};
\end{tikzpicture}
\begin{tikzpicture}[use Hobby shortcut, scale=.6]
\begin{knot}[
  consider self intersections = true,
  clip width=5,
  flip crossing/.list={1,5,6,7,8}
]
\strand[stealth-] (-3,1)..(-1,-1);
\draw (-1,-1)..(1,1);
\strand(-1,1)..(-3,-1);
\draw[-stealth]  (-1,1)..(1,-1);
\end{knot}
\draw[line width=1.5mm,red] (-.5,.5)..(.5,-.5);
\node at (-4,0) {$\sim$};
\end{tikzpicture}
    \caption{Transformation between arc diagram and stuck diagram.}
    \label{TRNAfolding1}
\end{figure}
The same transformation maybe be used when starting with a negative stuck crossing, see Figure~\ref{TRNAfolding2}. 
\begin{figure}[ht]
    \centering
    \begin{tikzpicture}[use Hobby shortcut, scale=.6]
\begin{knot}[
  consider self intersections = true,
  clip width=5,
  flip crossing/.list={5,6,7,8}
]
\draw[stealth-] (-3,1)..(-1,-1);
\draw[stealth-] (-1,1)..(-3,-1);
\draw[line width=1.5mm,red] (-2.5,.5)..(-1.5,-.5);
\draw[latex'-latex', thick](-.5,0)..(1,0);
\end{knot}
\node[above] at (.25,0) {$T$};
\end{tikzpicture}
\hspace{.2cm}
    \includegraphics[scale=.5]{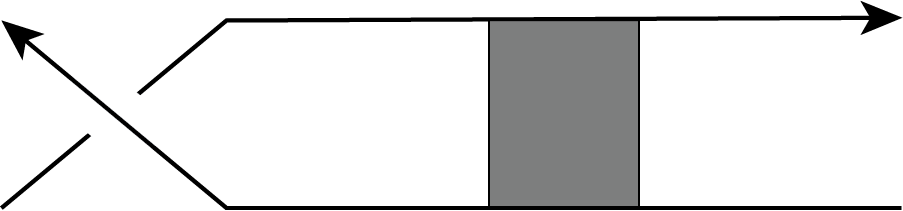}
    \put(10,10){$\sim$}
    \includegraphics[scale=.5]{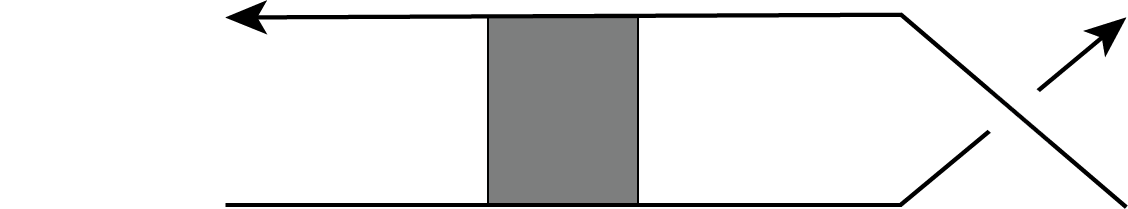}
    \vspace{1.5cm}
    
\begin{tikzpicture}[use Hobby shortcut, scale=.6]
\begin{knot}[
  consider self intersections = true,
  clip width=5,
  flip crossing/.list={5,6,7,8}
]
\draw[-stealth] (-3,1)..(-1,-1);
\draw[-stealth] (-1,1)..(-3,-1);
\draw[line width=1.5mm,red] (-2.5,.5)..(-1.5,-.5);
\draw[latex'-latex', thick](-.5,0)..(1,0);
\end{knot}
\node[above] at (.25,0) {$T$};
\end{tikzpicture}
\hspace{.2cm}    
    \includegraphics[scale=.5]{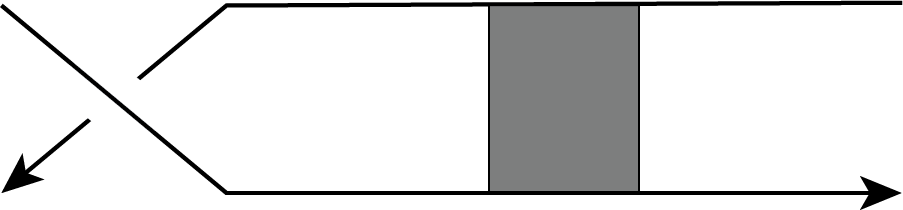}
        \put(10,10){$\sim$}
    \includegraphics[scale=.5]{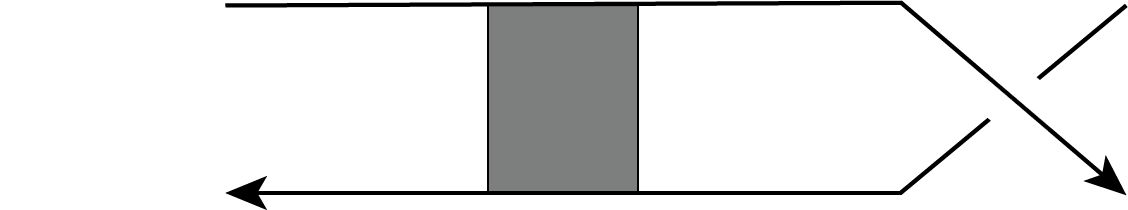}
\caption{Transformation starting with a stuck crossing.}
    \label{TRNAfolding2}
\end{figure}

In the following example we will consider an RNA folding and construct its corresponding stuck link diagram via the transformation defined above. Since the strands in an arc diagram can be closed in several ways, we will avoid any ambiguity by following the approach in \cites{B, TLKL} of self-closure.  

\begin{example}\label{RNAFoldingStuck}
We include arc diagrams of several RNA foldings and their corresponding stuck knot/link diagrams after the transformation $T$ and self-closure has been applied. In the first example, we will consider the following arc diagram of the following RNA folding with one strand and the corresponding stuck knot after applying the transformation $T$ and performing the self-closure. In this case, since the arc diagram only has one strand, the self-closure means connecting the loose ends.

\[
\includegraphics[scale=.4]{grayRNAFolding}
\hspace{1cm}
        \begin{tikzpicture}[use Hobby shortcut,scale=.7]
\begin{knot}[
  consider self intersections = true,
  clip width=5,
  flip crossing/.list={2,6,7,8}
]
\strand[-stealth] ([closed]-1,1)..(1,-1)..(-1,-3)..(0,-4)..(1,-3)..(-1,-1)..(1,1)..(0,2);
\draw[decoration={markings,mark=at position .35 with
  {\arrow[scale=1.5,>=stealth]{>}}},postaction={decorate}]([closed]-1,1)..(1,-1)..(-1,-3)..(0,-4)..(1,-3)..(-1,-1)..(1,1)..(0,2);
\end{knot}
\draw[line width=1.5mm,red] (.6,.4)..(-.59,-.25);
\draw[latex'-latex', thick](-4.5,0)..(-3,0);
\node[above] at (-3.75,0) {\tiny $T$};
\end{tikzpicture}
\]
Next, we consider an arc diagram of an RNA folding with two strands and the corresponding stuck link diagram after we apply the transformation $T$ and perform the self-closure. In this case, since the arc diagram has two strands, the self-closure means that we connect the loose ends of one strand to each other and the loose ends of the other strand to each other.
\[
\includegraphics[scale=.4]{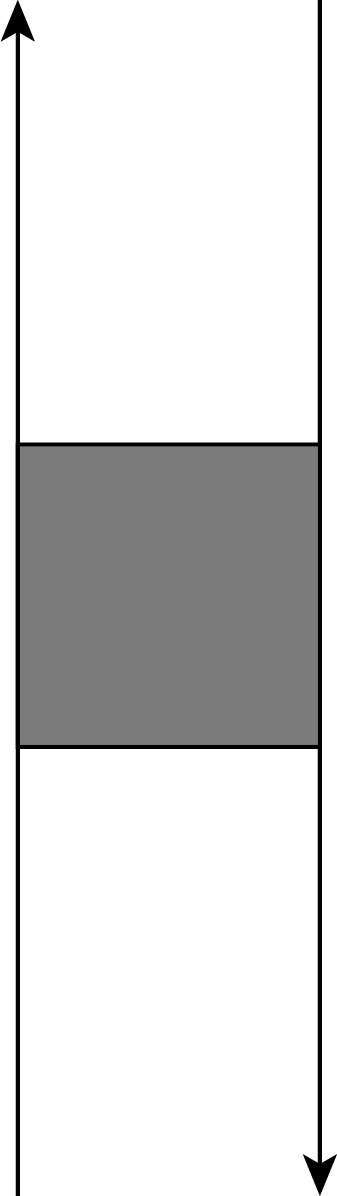}
\hspace{1cm}
\begin{tikzpicture}[use Hobby shortcut, scale=.8]
diagram on the left
\begin{knot}[
clip width=4,
  flip crossing=2
]
\strand(-1,0) circle[radius=2cm];
    \draw[decoration={markings,mark=at position .03 with
    {\arrow[scale=1.5,>=stealth]{<}}},postaction={decorate}](-1,0) circle[radius=2cm];
\strand (1,0) circle[radius=2cm];
    \draw[decoration={markings,mark=at position .2 with
    {\arrow[scale=1.5,>=stealth]{<}}},postaction={decorate}] (1,0) circle[radius=2cm];
\end{knot}
\draw [red,line width=2mm,domain=100:135] plot ({2*cos(\x)+1}, {2*sin(\x)});
\draw[latex'-latex', thick](-6,0)..(-4.5,0);
\node[above] at (-5.25,0) {\tiny $T$};
\end{tikzpicture}
\]
\end{example}

In the following section, we will introduce a set of moves that may be applied to stuck link diagrams to simplify the diagram while still representing the same stuck link. For example, in the first example in Example~\ref{RNAFoldingStuck}, the diagram on the right may be simplified by applying an $\Omega b1$ Reidemeister move.

\section{Generating Set for Oriented Reidemeister Moves for Stuck Knots}\label{GenSet}
This section will introduce a generating set of oriented stuck Reidemeister moves. We note that minimal generating sets for the classical Reidemeister moves have been studied by Polyak, see \cite{P}. Polyak proved the following set is a minimal generating set of the classical Reidemeister moves, see Figure \ref{rmoves}.

\begin{figure}[ht]
    \centering
\begin{tikzpicture}[use Hobby shortcut,scale=.5]
\begin{knot}[
  consider self intersections = true,
  clip width=5,
  flip crossing/.list={5,6,7,8}
]
\strand[-stealth](-5,-3)..(-5,3);
\draw[<->] (-4.5,0)..(-3,0);
\strand[-stealth](-3,-3)..(-2.5,-2)..(-1,.5)..(1,0)..(-1,-.5)..(-2.5,2)..(-3,3); 
\end{knot}
\node[above] at (-3.75,-0) {\tiny $\Omega 1a$};
\end{tikzpicture}
\hspace{1cm}
\begin{tikzpicture}[use Hobby shortcut,scale=.5]
\begin{knot}[
  consider self intersections = true,
  clip width=5,
  flip crossing/.list={5,6,7,8}
]
\strand[stealth-](-5,-3)..(-5,3);
\draw[<->] (-4.5,0)..(-3,0);
\strand[stealth-](-3,-3)..(-2.5,-2)..(-1,.5)..(1,0)..(-1,-.5)..(-2.5,2)..(-3,3); 
\end{knot}
\node[above] at (-3.75,-0) {\tiny $\Omega 1b$};
\end{tikzpicture}

\vspace{1cm}
\begin{tikzpicture}[use Hobby shortcut,scale=.5]
\begin{knot}[
  clip width=5,
  flip crossing/.list={5,6,7,8}
]
\strand[-stealth](-1,-2)..(-2,-1)..(-3,0)..(-2,1)..(-1,2);   
\strand[-stealth](-3,-2)..(-2,-1)..(-1,0)..(-2,1)..(-3,2);
\draw[<->] (.5,0)..(2.5,0);
\strand[-stealth](4,-2)..(4,2);    
\strand[-stealth](6,-2)..(6,2);
\end{knot}
\node[above] at (1.5,0) {\tiny $\Omega 2a$};
\end{tikzpicture}

\vspace{1cm}
\begin{tikzpicture}[use Hobby shortcut,scale=.5]
\begin{knot}[
  consider self intersections = true,
  clip width=5,
  flip crossing/.list={3,6,7,8}
]
\strand[-stealth](-3,-3.5)..(-7,-3.5);
\strand[stealth-](-4.5,-1)..(-6,-5);   
\strand[-stealth](-5.5,-1)..(-4,-5);
 \draw[<->] (-2.5,-3)..(-1.5,-3);
\strand[-stealth](3,-2.5)..(-1,-2.5);
\strand[stealth-](2,-1)..(.5,-5); 
\strand[-stealth](0,-1)..(1.5,-5);
\end{knot}
\node[above] at (-2,-3) {\tiny $\Omega 3a$};
\end{tikzpicture}
    \caption{A minimial generating set for the classical Reidemeister Moves.}
    \label{rmoves}
\end{figure}
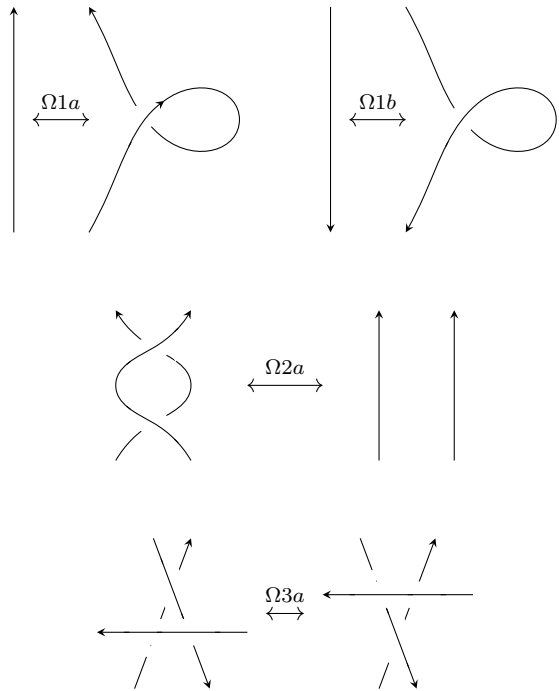

Furthermore, Bataineh, Elhamdadi, Hajij, and Youmans investigated generating sets for Reidemeister-like moves for singular links. They were able to show that in the presence of the Reidemeister moves in Figure \ref{rmoves}, the moves in Figure \ref{singmoves} are enough to generate all generalized Reidemeister moves, see \cite{BEHY}. We will use the naming convention for the oriented classical and singular Reidemeister moves used in \cite{BEHY}.

\begin{figure}[ht]
    \centering

\begin{tikzpicture}[use Hobby shortcut,scale=.5]
\begin{knot}[
  clip width=4,
  flip crossing/.list={5,6,7,8}
]
\strand[-stealth](-1,-2)..(-2,-1)..(-3,0)..(-2,1)..(-1,2);   
\strand[-stealth](-3,-2)..(-2,-1)..(-1,0)..(-2,1)..(-3,2);
\draw[<->] (.5,0)..(2.5,0);
\strand[-stealth](4,-2)..(5,-1)..(6,0)..(5,1)..(4,2);    
\strand[-stealth](6,-2)..(5,-1)..(4,0)..(5,1)..(6,2);
\end{knot}
\node[circle,draw=black, fill=black, inner sep=0pt,minimum size=7pt] (a) at (-2,-1) {};
\node[circle,draw=black, fill=black, inner sep=0pt,minimum size=7pt] (a) at (5,1) {};
\node[above] at (1.5,0) {\tiny $\Omega 5a$};
\end{tikzpicture}

\vspace{1cm}
\begin{tikzpicture}[use Hobby shortcut,scale=.5]
\begin{knot}[
  consider self intersections = true,
  clip width=4,
  flip crossing/.list={3,6,7,8}
]
\strand[-stealth](-3,-3.5)..(-7,-3.5);
\strand[stealth-](-4.5,-1)..(-6,-5);   
\strand[-stealth](-5.5,-1)..(-4,-5);
 \draw[<->] (-2.5,-3)..(-1.5,-3);
\strand[-stealth](3,-2.5)..(-1,-2.5);
\strand[stealth-](2,-1)..(.5,-5); 
\strand[-stealth](0,-1)..(1.5,-5);
\end{knot}
\node[circle,draw=black, fill=black, inner sep=0pt,minimum size=7pt] (a) at (-5,-2.35) {};
\node[circle,draw=black, fill=black, inner sep=0pt,minimum size=7pt] (a) at (1,-3.67) {};
\node[above] at (-2,-3) {\tiny $\Omega 4a$};
\end{tikzpicture}

\vspace{1cm}
\begin{tikzpicture}[use Hobby shortcut,scale=.5]
\begin{knot}[
  consider self intersections = true,
  clip width=4,
  flip crossing/.list={7,8}
]
\strand[stealth-](-4.5,-1)..(-6,-5);   
\strand[-stealth](-5.5,-1)..(-4,-5);
\strand[-stealth](-3,-3.5)..(-7,-3.5);
 \draw[<->] (-2.5,-3)..(-1.5,-3);
\strand[stealth-](2,-1)..(.5,-5); 
\strand[-stealth](0,-1)..(1.5,-5);
\strand[-stealth](3,-2.5)..(-1,-2.5);
\end{knot}
\node[circle,draw=black, fill=black, inner sep=0pt,minimum size=7pt] (a) at (-5,-2.35) {};
\node[circle,draw=black, fill=black, inner sep=0pt,minimum size=7pt] (a) at (1,-3.67) {};
\node[above] at (-2,-3) {\tiny $\Omega 4e$};
\end{tikzpicture}
    \caption{Additional singular Reidemeister moves}
    \label{singmoves}
\end{figure}
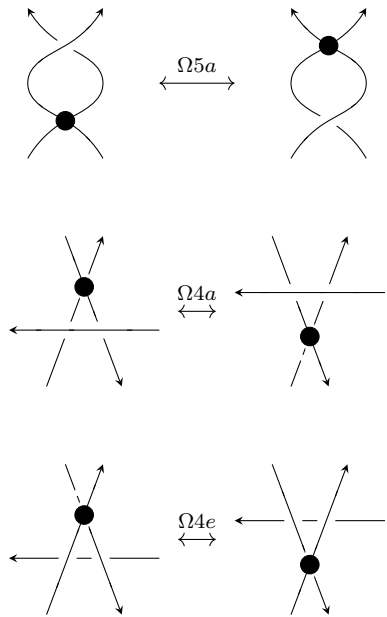

Each singular Reidemeister move will give rise to two stuck Reidemeister moves. The singular Reidemeister move $\Omega 5a$ gives rise to the following two stuck moves, see Figure~\ref{stuck1}.

\begin{figure}[ht]
    \centering

\begin{tikzpicture}[use Hobby shortcut,scale=.6]
\begin{knot}[
  clip width=4,
  flip crossing/.list={5,6,7,8}
]
\strand[-stealth](-1,-2)..(-2,-1)..(-3,0)..(-2,1)..(-1,2);   
\strand[-stealth](-3,-2)..(-2,-1)..(-1,0)..(-2,1)..(-3,2);
\draw[<->] (.5,0)..(2.5,0);
\strand[-stealth](4,-2)..(5,-1)..(6,0)..(5,1)..(4,2);    
\strand[-stealth](6,-2)..(5,-1)..(4,0)..(5,1)..(6,2);
\end{knot}
\draw[line width=2.2mm,red] (-2.3,-.84)..(-1.7,-1.2);
\draw[line width=2.2mm,red] (5.3,.84)..(4.7,1.2);
\end{tikzpicture}
\hspace{.8cm}
\begin{tikzpicture}[use Hobby shortcut,scale=.6]
\begin{knot}[
  clip width=4,
  flip crossing/.list={5,6,7,8}
]
\strand[-stealth](-1,-2)..(-2,-1)..(-3,0)..(-2,1)..(-1,2);   
\strand[-stealth](-3,-2)..(-2,-1)..(-1,0)..(-2,1)..(-3,2);
\draw[<->] (.5,0)..(2.5,0);
\strand[-stealth](4,-2)..(5,-1)..(6,0)..(5,1)..(4,2);    
\strand[-stealth](6,-2)..(5,-1)..(4,0)..(5,1)..(6,2);
\end{knot}
\draw[line width=2.2mm,red] (-2.3,-1.2)..(-1.7,-.84);
\draw[line width=2.2mm,red] (5.3,1.2)..(4.7,.84);
\end{tikzpicture}
\caption{Oriented stuck Reidemeister moves derived from $\Omega 5a$.}
\label{stuck1}
\end{figure}

Similarly, the singular Reidemeister move $\Omega 4a$ gives rise to the following two stuck moves, see Figure~\ref{stuck2}.

\begin{figure}[ht]
    \centering
\begin{tikzpicture}[use Hobby shortcut,scale=.6]
\begin{knot}[
  consider self intersections = true,
  clip width=4,
  flip crossing/.list={3,6,7,8}
]
\strand[-stealth](-3,-3.5)..(-7,-3.5);
\strand[stealth-](-4.5,-1)..(-6,-5);   
\strand[-stealth](-5.5,-1)..(-4,-5);
 \draw[<->] (-2.5,-3)..(-1.5,-3);
\strand[-stealth](3,-2.5)..(-1,-2.5);
\strand[stealth-](2,-1)..(.5,-5); 
\strand[-stealth](0,-1)..(1.5,-5);
\end{knot}
\draw[line width=2.2mm,red] (-5.15,-2.7)..(-4.86,-2);
\draw[line width=2.2mm,red] (.9,-4)..(1.16,-3.3);
\end{tikzpicture}
\hspace{.4cm}
\begin{tikzpicture}[use Hobby shortcut,scale=.6]
\begin{knot}[
  consider self intersections = true,
  clip width=4,
  flip crossing/.list={3,6,7,8}
]
\strand[-stealth](-3,-3.5)..(-7,-3.5);
\strand[stealth-](-4.5,-1)..(-6,-5);   
\strand[-stealth](-5.5,-1)..(-4,-5);
 \draw[<->] (-2.5,-3)..(-1.5,-3);
\strand[-stealth](3,-2.5)..(-1,-2.5);
\strand[stealth-](2,-1)..(.5,-5); 
\strand[-stealth](0,-1)..(1.5,-5);
\end{knot}
\draw[line width=2.2mm,red] (-5.15,-2)..(-4.86,-2.7);
\draw[line width=2.2mm,red] (.9,-3.3)..(1.16,-4);
\end{tikzpicture}
\caption{Oriented stuck Reidemeister moves derived from $\Omega 4a$.}
\label{stuck2}
\end{figure}
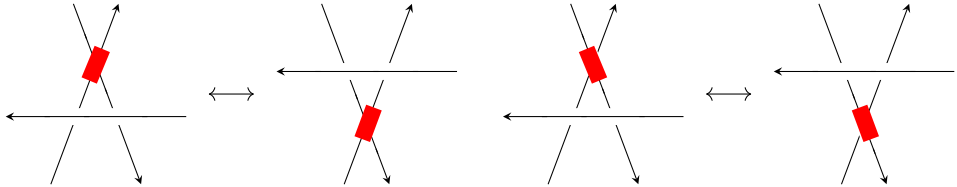
Lastly, we derive the following two stuck Reidemeister moves from the $\Omega 4e$ singular Reidemeister move, see Figure~\ref{stuck3}.

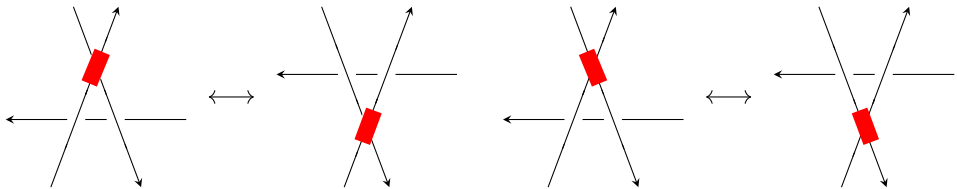
\begin{figure}[ht]
    \centering
    \begin{tikzpicture}[use Hobby shortcut,scale=.6]
\begin{knot}[
  consider self intersections = true,
  clip width=4,
  flip crossing/.list={7,8}
]
\strand[stealth-](-4.5,-1)..(-6,-5);   
\strand[-stealth](-5.5,-1)..(-4,-5);
\strand[-stealth](-3,-3.5)..(-7,-3.5);
 \draw[<->] (-2.5,-3)..(-1.5,-3);
\strand[stealth-](2,-1)..(.5,-5); 
\strand[-stealth](0,-1)..(1.5,-5);
\strand[-stealth](3,-2.5)..(-1,-2.5);
\end{knot}
\draw[line width=2.2mm,red] (-5.15,-2.7)..(-4.86,-2);
\draw[line width=2.2mm,red] (.9,-4)..(1.16,-3.3);
\end{tikzpicture}
\hspace{.4cm}
\begin{tikzpicture}[use Hobby shortcut,scale=.6]
\begin{knot}[
  consider self intersections = true,
  clip width=4,
  flip crossing/.list={7,8}
]
\strand[stealth-](-4.5,-1)..(-6,-5);   
\strand[-stealth](-5.5,-1)..(-4,-5);
\strand[-stealth](-3,-3.5)..(-7,-3.5);
 \draw[<->] (-2.5,-3)..(-1.5,-3);
\strand[stealth-](2,-1)..(.5,-5); 
\strand[-stealth](0,-1)..(1.5,-5);
\strand[-stealth](3,-2.5)..(-1,-2.5);
\end{knot}
\draw[line width=2.2mm,red] (-5.15,-2)..(-4.86,-2.7);
\draw[line width=2.2mm,red] (.9,-3.3)..(1.16,-4);
\end{tikzpicture}
    \caption{Oriented stuck Reidemeister moves derived from $\Omega 4e$}
\label{stuck3}
\end{figure}
Since each singular crossing has two possible ways it can be stuck, each move in Figure~\ref{singmoves} gives rise to two stuck moves. Therefore, in the presence of the oriented classical Reidemeister moves, Figure~\ref{rmoves}, the moves in Figure~\ref{stuck1}, \ref{stuck2} and \ref{stuck3} form a generating set of oriented stuck Reidemeister moves. The proof that this is a generating set for the stuck Reidemeister moves follows directly from the proof of the generating set of the generalized Reidemeister moves in \cite{BEHY}.

\section{Quandles and Singquandles} \label{quandlesreview}
In this section, we will review the definitions and provide basic examples of quandles and singquandles and introduce an algebraic structure that axiomatizes the oriented stuck Reidemeister moves. We will first review the basics of quandle theory; more details on the topic can be found in  \cites{EN, Joyce, Matveev}. We will use the following convention at a classical crossing, see Figure~\ref{crossingrule}.
\begin{figure}[ht]
    \centering
    \begin{tikzpicture}[use Hobby shortcut]
\begin{knot}[
  consider self intersections = true,
  clip width=5,
  flip crossing/.list={1,5,6,7,8}
]
\strand[-stealth] (-6,1)..(-4,-1);
\strand[-stealth] (-4,1)..(-6,-1);
\end{knot}
\node[above] at (-6,1) {\tiny $x$};
\node[above] at (-4,1) {\tiny $y$};
\node[below] at (-4,-1) {\tiny $x*y$};
\end{tikzpicture}
\hspace{1cm}
    \begin{tikzpicture}[use Hobby shortcut]
\begin{knot}[
  consider self intersections = true,
  clip width=5,
  flip crossing/.list={1,5,6,7,8}
]
\strand[-stealth] (-4,1)..(-6,-1);
\strand[-stealth] (-6,1)..(-4,-1);
\end{knot}
\node[above] at (-6,1) {\tiny $y$};
\node[above] at (-4,1) {\tiny $x$};
\node[below] at (-6,-1) {\tiny $x\bar{*}y$};
\end{tikzpicture}
    \caption{Coloring rules.}
    \label{crossingrule}
\end{figure}
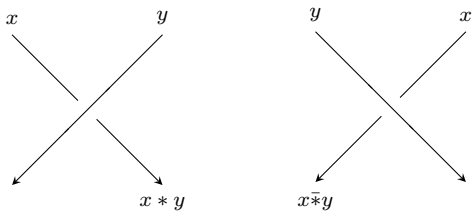

\begin{definition}\label{quandle}  
A set $(X,\ast)$ is called a \emph{quandle} if the following three identities are satisfied.
\begin{eqnarray*}
& &\mbox{\rm (i) \ }   \mbox{\rm  For all $x \in X$,
$x* x =x$.} \label{axiom1} \\
& & \mbox{\rm (ii) \ }\mbox{\rm For all $y,z \in X$, there is a unique $x \in X$ such that 
$ x*y=z$.} \label{axiom2} \\
& &\mbox{\rm (iii) \ }  
\mbox{\rm For all $x,y,z \in X$, we have
$ (x*y)*z=(x*z)*(y*z). $} \label{axiom3} 
\end{eqnarray*}
\end{definition} 

From Axiom (ii) of Definition~\ref{quandle} we can write the element $x$ as $z \bar{*} y = x$. Notice that this operation $\bar{*}$ defines a quandle structure on $X$. A quandle's axioms correspond to the three Reidemeister moves of types I, II, and III. The typical examples are: (1) Alexander quandles given by $x*y=tx+(1-t)y$ on a $\mathbb{Z}[t,t^{-1}]$-module $M$ and (2) conjugation quandle defined on groups $G$ with $x*y=yxy^{-1}$.

Next, we review the basics of singquandles; more details on the topic can be found in \cites{BEHY, CCE1, CCE2,CCEH}.

\begin{definition}\label{SingQdle}
	Let $(X, *)$ be a quandle.  Let $R_1$ and $R_2$ be two maps from $X \times X$ to $X$.  The quadruple $(X, *, R_1, R_2)$ is called an {\it oriented singquandle} if the following axioms are satisfied:
	\begin{eqnarray}
		R_1(x\bar{*}y,z)*y&=&R_1(x,z*y) \;\;\;\text{coming from $\Omega$4a} \label{eq1}\\
		R_2(x\bar{*}y, z) & =&  R_2(x,z*y)\bar{*}y \;\;\;\text{coming from $\Omega$4a}\label{eq2}\\
	      (y\bar{*}R_1(x,z))*x   &=& (y*R_2(x,z))\bar{*}z \;\;\;\text{coming from $\Omega$4e } \label{eq3}\\
R_2(x,y)&=&R_1(y,x*y)  \;\;\;\text{coming from $\Omega$5a} \label{eq4}\\
R_1(x,y)*R_2(x,y)&=&R_2(y,x*y)  \;\;\;\text{coming from $\Omega$5a} \label{eq5}	
\end{eqnarray}	
\end{definition}

Similar to the case of quandles, a singquandle's axioms are motivated by the generalized Reidemeister moves following classical coloring rule in Figure~\ref{crossingrule} and singular coloring rule in Figure~\ref{singrule}.

\begin{figure}[ht]
    \centering
    \begin{tikzpicture}[use Hobby shortcut]
\begin{knot}[
  consider self intersections = true,
  clip width=5,
  flip crossing/.list={1,5,6,7,8}
]
\draw[-stealth] (-6,1)..(-4,-1);
\draw[-stealth] (-4,1)..(-6,-1);
\end{knot}
\node[circle,draw=black, fill=black, inner sep=0pt,minimum size=6pt] (a) at (-5,0) {};
\node[above] at (-6,1) {\tiny $x$};
\node[above] at (-4,1) {\tiny $y$};
\node[below] at (-4,-1) {\tiny $R_1(x,y)$};
\node[below] at (-6,-1) {\tiny $R_2(x,y)$};
\end{tikzpicture}
    \caption{Coloring rule at a singular crossing.}
    \label{singrule}
\end{figure}
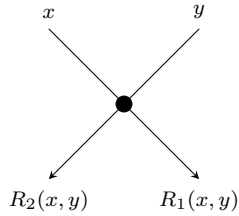

The following two examples were given in \cite{BEHY}.
\begin{example}\label{generalizedaffinesingquandle}
For an invertible element $a \in X=\mathbb{Z}_n$, consider the quandle operation $x*y = ax+(1-a)y$.  Define the maps $R_1$ and $R_2$ by $R_1(x,y) = bx + (1 - b)y$ and $R_2(x,y) = a(1 - b)x + (1 - a(1 - b))y$, then $(X,*,R_1,R_2)$  is an oriented singquandle.
\end{example}

\begin{example}
	Consider the conjugation quandle, $x*y=y^{-1}xy$, on a non-abelian group  $X=G$.  Then $(X, *, R_1, R_2)$ is a singquandle, where for all $x, y \in G$ the maps $R_1$ and $R_2$ are given by:
	\begin{enumerate}
    		\item  $R_1(x,y)=x$ and $R_2(x, y)=y$.
        \item  $R_1(x,y)=xyxy^{-1}x^{-1}$ and $R_2(x, y)=xyx^{-1}$.
        \item  $R_1(x,y)=y^{-1}xy$ and $R_2(x, y)=y^{-1}x^{-1}yxy$.
          \item  $R_1(x,y)=xy^{-1}x^{-1}yx,$ and $R_2(x, y)=x^{-1}y^{-1}xy^2$.
\item  $R_1(x,y)=y(x^{-1}y)^n$ and $R_2(x, y)=(y^{-1}x)^{n+1}y$, where $n \geq 1$.
\end{enumerate}
 \end{example}   

\section{Algebraic Structures for Stuck Knots: Stuquandles} \label{stuckq}
Since the oriented stuck Reidemeister moves are a generalization of the oriented generalized Reidemeister moves, this suggests introducing a new algebraic structure for coloring oriented stuck knots and links with new operations at stuck crossings.

\begin{definition}\label{stuquandle}
	Let $(X, *,R_1,R_2 )$ be a singquandle.  Let $R_3$ and $R_4$ are maps from $X \times X$ to $X$.  If $R_3$ and $R_4$ satisfy the following axioms for all $x,y,z \in X$:
	\begin{eqnarray}
		R_3(y,x)*R_4(y,x)&=&R_4(x*y,y),\label{eq6}\\
	    R_4(y,x)&=&R_3(x*y,y),\label{eq7}\\
	    R_3(y*x,z)& =&R_3(y,z\bar{*}x)*x   ,\label{eq8}\\
		R_4(y,z\bar{*}x)&=&R_4(y*x,z)\bar{*}x, \label{eq9} \\
	    (x*R_4(y,z))\bar{*}y &=& (x\bar{*}R_3(y,z))*z,\label{eq10}
\end{eqnarray}	
then the 6-tuple $(X, *, R_1, R_2, R_3, R_4)$ is called an \emph{oriented stuquandle}.
\end{definition}
\begin{remark}
In this paper we will only consider oriented stuck knots, therefore, we will refer to oriented stuquandles by simply stuquandles.
\end{remark}

\begin{definition}\label{colorrule}
Let $(X, *, R_1,R_2,R_3,R_4)$ be a stuquandle and $L$ an oriented stuck diagram. Then a \emph{coloring of $L$ by $X$} is an assignment of elements of $X$ to the semiarcs of $L$ obeying the following crossing rules:

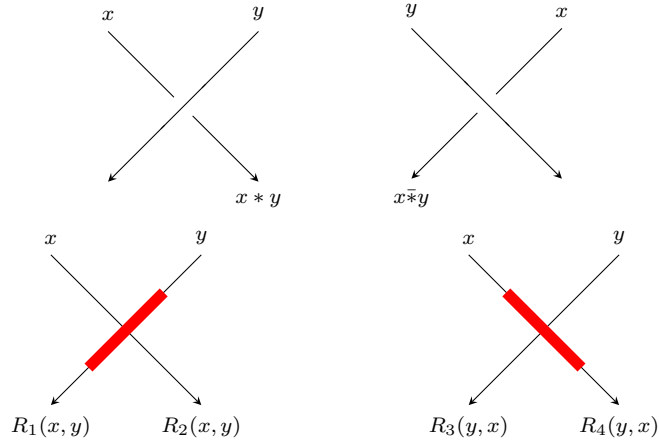
\begin{figure}[ht]
\centering
    \begin{tikzpicture}[use Hobby shortcut]
\begin{knot}[
  consider self intersections = true,
  clip width=5,
  flip crossing/.list={1,5,6,7,8}
]
\strand[-stealth] (-6,1)..(-4,-1);
\strand[-stealth] (-4,1)..(-6,-1);
\end{knot}
\node[above] at (-6,1) {\tiny $x$};
\node[above] at (-4,1) {\tiny $y$};
\node[below] at (-4,-1) {\tiny $x*y$};
\end{tikzpicture}
\hspace{1cm}
    \begin{tikzpicture}[use Hobby shortcut]
\begin{knot}[
  consider self intersections = true,
  clip width=5,
  flip crossing/.list={1,5,6,7,8}
]
\strand[-stealth] (-4,1)..(-6,-1);
\strand[-stealth] (-6,1)..(-4,-1);
\end{knot}
\node[above] at (-6,1) {\tiny $y$};
\node[above] at (-4,1) {\tiny $x$};
\node[below] at (-6,-1) {\tiny $x\bar{*}y$};
\end{tikzpicture}

\begin{tikzpicture}[use Hobby shortcut]
\begin{knot}[
  consider self intersections = true,
  clip width=5,
  flip crossing/.list={1,5,6,7,8}
]
\draw[-stealth] (-3,1)..(-1,-1);
\draw[-stealth] (-1,1)..(-3,-1);
\draw[line width=1.5mm,red] (-2.5,-.5)..(-1.5,.5);
\end{knot}
\node[above] at (-3,1) {\tiny $x$};
\node[above] at (-1,1) {\tiny $y$};
\node[below] at (-3,-1) {\tiny $R_1(x,y)$};
\node[below] at (-1,-1) {\tiny $R_2(x,y)$};
\end{tikzpicture}
\hspace{2cm}
\begin{tikzpicture}[use Hobby shortcut]
\begin{knot}[
  consider self intersections = true,
  clip width=5,
  flip crossing/.list={1,5,6,7,8}
]
\draw[-stealth] (-3,1)..(-1,-1);
\draw[-stealth] (-1,1)..(-3,-1);
\draw[line width=1.5mm,red] (-2.5,.5)..(-1.5,-.5);
\end{knot}
\node[above] at (-3,1) {\tiny $x$};
\node[above] at (-1,1) {\tiny $y$};
\node[below] at (-3,-1) {\tiny $R_3(y,x)$};
\node[below] at (-1,-1) {\tiny $R_4(y,x)$};
\end{tikzpicture}
\caption{Coloring rules at crossings.}
\label{newrule}
\end{figure}
\end{definition}

The stuquandle axioms are motivated by the oriented stuck Reidemeister moves for stuck link diagrams using the coloring rules presented in Figure~\ref{newrule}. Therefore, we obtain the following result. 


\begin{theorem}\label{counting}
Let $L$ be an oriented stuck link diagram. For any finite stuquandle $(X,*,R_1,R_2,R_3,R_4)$, the number of colorings of $L$ by $X$ are invariant under the oriented stuck Reidemeister moves and defines, $Col_X(L)$, the stuquandle counting invariant. 
\end{theorem}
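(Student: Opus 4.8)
The plan is to localize the claim to the generating set of moves from Section~\ref{GenSet} and to exhibit, for each single generator, a bijection between the colorings of the diagram before the move and after it. Recall from Definition~\ref{colorrule} that a coloring of $L$ assigns an element of $X$ to every semiarc so that at each crossing the local rules of Figure~\ref{newrule} hold: a classical crossing imposes the output $x \ast y$ (respectively $x \bar{\ast} y$), a negatively stuck crossing imposes the pair $R_1(x,y), R_2(x,y)$, and a positively stuck crossing imposes $R_3(y,x), R_4(y,x)$. Since $X$ is finite, the number of colorings $Col_X(L)$ is a well-defined non-negative integer, and since any diagram equivalent to $L$ is reached by a finite sequence of the moves in Figures~\ref{rmoves}, \ref{stuck1}, \ref{stuck2}, and \ref{stuck3}, it suffices to prove that each such move induces a bijection
\[
\Phi \colon \{\,\text{colorings of the diagram before the move}\,\} \;\longrightarrow\; \{\,\text{colorings after the move}\,\}.
\]
For every generator the two diagrams coincide outside a disk $D$ and agree on the colors of the arcs meeting $\partial D$, so I would fix an arbitrary admissible coloring of the boundary arcs and compare the number of interior extensions on the two sides; the candidate bijection $\Phi$ is the identity on $\partial D$.

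For the classical moves $\Omega 1a$, $\Omega 1b$, $\Omega 2a$, $\Omega 3a$ the verification is verbatim the quandle argument: idempotency, invertibility of $\ast$ (Axiom (ii) of Definition~\ref{quandle}, which produces $\bar{\ast}$), and right self-distributivity (Axiom (iii)) together guarantee both that the outgoing colors match and that each admissible boundary coloring extends in exactly one way on each side, so no colorings are created or destroyed.

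The substance of the proof lies in the six stuck moves. Each singular move $\Omega 4a$, $\Omega 4e$, $\Omega 5a$ spawns two stuck moves, according to whether the stuck crossing is negative (colored by $R_1, R_2$) or positive (colored by $R_3, R_4$). For the negatively stuck versions, propagating the boundary colors through the classical and negatively stuck crossings and equating the outputs on the two sides reproduces precisely the singquandle relations \eqref{eq1}--\eqref{eq5} of Definition~\ref{SingQdle}, so invariance here is inherited exactly as in \cite{BEHY}. For the positively stuck versions, the same propagation through positively stuck crossings yields the new relations: the $\Omega 5a$ pair (Figure~\ref{stuck1}) gives \eqref{eq6}--\eqref{eq7}, the $\Omega 4a$ pair (Figure~\ref{stuck2}) gives \eqref{eq8}--\eqref{eq9}, and the $\Omega 4e$ pair (Figure~\ref{stuck3}) gives \eqref{eq10}. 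Because Definition~\ref{stuquandle} is engineered to make exactly these identities hold, the outgoing colors on the two sides of each positively stuck move agree, and $\Phi$ is a bijection. Assembling the per-move bijections shows $Col_X(L)$ is unchanged by every generator, hence by every oriented stuck Reidemeister move.

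The main obstacle I anticipate is bookkeeping rather than conceptual. One must orient every strand in Figures~\ref{stuck1}--\ref{stuck3} consistently, keep straight which strand carries the red (stuck) bar and which is the classical over-strand, and respect the argument-swap $R_3(y,x), R_4(y,x)$ appearing in the coloring rule of Figure~\ref{newrule}. The delicate point is confirming that on the side of a move where an interior arc is newly created its color is \emph{forced}, not free; this is what upgrades the evident surjection to a genuine bijection, and it follows from the invertibility provided by Axiom (ii) together with the fact that the extension data governed by $R_1,R_2,R_3,R_4$ is determined by the boundary inputs. Once this is checked for each of the six local pictures, the theorem follows.
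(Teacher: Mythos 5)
Your proposal is correct and takes essentially the same approach as the paper: reduce invariance to the generating set of Section~\ref{GenSet}, dispose of the classical moves by the quandle axioms, and verify that propagating colors through the six stuck moves forces exactly the singquandle axioms \eqref{eq1}--\eqref{eq5} for the negatively stuck versions and the stuquandle axioms \eqref{eq6}--\eqref{eq10} for the positively stuck versions, with the same move-to-axiom correspondence ($\Omega 5a \leftrightarrow$ \eqref{eq6}--\eqref{eq7}, $\Omega 4a \leftrightarrow$ \eqref{eq8}--\eqref{eq9}, $\Omega 4e \leftrightarrow$ \eqref{eq10}) that the paper records. Your explicit note that the colors of newly created interior semiarcs are forced by the boundary data (upgrading the correspondence to a bijection) is left implicit in the paper's labeled-diagram checks, but it is the same argument.
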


\begin{proof}
As in classical knot theory, there is a one-to-one correspondence between colorings before and after each move in the generating set of stuck Reidemeester moves. Note that $(X,*)$ is a quandle, therefore, the colorings are invaraint under the oriented classical Reidemeister moves. Therefore, we must check the oriented stuck Reidemeistere moves. First we check the oriented stuck Reidemeister moves derived from $\Omega5a$.
\[
\begin{tikzpicture}[use Hobby shortcut,scale=.8]
\begin{knot}[
  clip width=4,
  flip crossing/.list={5,6,7,8}
]
\strand[-stealth](-1,-2)..(-2,-1)..(-3,0)..(-2,1)..(-1,2);   
\strand[-stealth](-3,-2)..(-2,-1)..(-1,0)..(-2,1)..(-3,2);
\draw[<->] (.5,0)..(2.5,0);
\strand[-stealth](4,-2)..(5,-1)..(6,0)..(5,1)..(4,2);    
\strand[-stealth](6,-2)..(5,-1)..(4,0)..(5,1)..(6,2);
\end{knot}
\draw[line width=2.2mm,red] (-2.3,-.84)..(-1.7,-1.2);
\draw[line width=2.2mm,red] (5.3,.84)..(4.7,1.2);
\node[left] at (-3,-2) {\small $y$};
\node[right] at (-1,-2) {\small $x$};
\node[left] at (-3,0) {\small $R_4(y,x)$};
\node[right] at (-1,.5) {\small $R_3(y,x)$};
\node[left] at (-3,2) {\small $R_3(y,x) *R_4(y,x)$};
\node[right] at (-1,2) {\small $R_4(y,x)$};
\node[left] at (4,-2) {\small $y$};
\node[right] at (6,-2) {\small $x$};
\node[left] at (4,0) {\small $x*y$};
\node[right] at (6,0) {\small $y$};
\node[left] at (4,2) {\small $R_4(x*y,y)$};
\node[right] at (6,2) {\small $R_3(x*y,y)$};
\end{tikzpicture}
\]
\[
\begin{tikzpicture}[use Hobby shortcut,scale=.8]
\begin{knot}[
  clip width=4,
  flip crossing/.list={5,6,7,8}
]
\strand[-stealth](-1,-2)..(-2,-1)..(-3,0)..(-2,1)..(-1,2);   
\strand[-stealth](-3,-2)..(-2,-1)..(-1,0)..(-2,1)..(-3,2);
\draw[<->] (.5,0)..(2.5,0);
\strand[-stealth](4,-2)..(5,-1)..(6,0)..(5,1)..(4,2);    
\strand[-stealth](6,-2)..(5,-1)..(4,0)..(5,1)..(6,2);
\end{knot}
\draw[line width=2.2mm,red] (-2.3,-1.2)..(-1.7,-.84);
\draw[line width=2.2mm,red] (5.3,1.2)..(4.7,.84);
\node[left] at (-3,-2) {\small $y$};
\node[right] at (-1,-2) {\small $x$};
\node[left] at (-3,0) {\small $R_2(x,y)$};
\node[right] at (-1,.5) {\small $R_1(x,y)$};
\node[left] at (-3,2) {\small $R_1(x,y) *R_2(x,y)$};
\node[right] at (-1,2) {\small $R_2(x,y)$};
\node[left] at (4,-2) {\small $y$};
\node[right] at (6,-2) {\small $x$};
\node[left] at (4,0) {\small $x*y$};
\node[right] at (6,0) {\small $y$};
\node[left] at (4,2) {\small $R_2(y,x*y)$};
\node[right] at (6,2) {\small $R_1(y,x*y)$};
\end{tikzpicture}
\]
We have axioms (4), (5) in Definition~\ref{SingQdle} and (6), (7) in Definition~\ref{stuquandle}. Next we check the oriented stuck Reidemeister moves derived from $\Omega4a$.
\[
\begin{tikzpicture}[use Hobby shortcut,scale=.8]
\begin{knot}[
  consider self intersections = true,
  clip width=4,
  flip crossing/.list={3,6,7,8}
]
\strand[-stealth](-3,-3.5)..(-7,-3.5);
\strand[stealth-](-4.5,-1)..(-6,-5);   
\strand[-stealth](-5.5,-1)..(-4,-5);
 \draw[<->] (-2.5,-3)..(-1.5,-3);
\strand[-stealth](3,-2.5)..(-1,-2.5);
\strand[stealth-](2,-1)..(.5,-5); 
\strand[-stealth](0,-1)..(1.5,-5);
\end{knot}
\draw[line width=2.2mm,red] (-5.15,-2.7)..(-4.86,-2);
\draw[line width=2.2mm,red] (.9,-4)..(1.16,-3.3);
\node[below] at (-3,-3.5) {\small $x$};
\node[left] at (-5.5,-1) {\small $y$};
\node[left] at (-6,-5) {\small $z$};
\node[right] at (-4.5,-1) {\small $R_4(y,z\bar{*}x)$};
\node[below] at (-4,-5) {\small $R_3(y,z\bar{*}x)*x$};
\node[below] at (3,-2.5) {\small $x$};
\node[left] at (0,-1) {\small $y$};
\node[left] at (.5,-5) {\small $z$};
\node[right] at (2,-1) {\small $R_4(y*x,z)\bar{*}x$};
\node[below] at (2.5,-5) {\small $R_3(y*x,z)$};
\end{tikzpicture}
\]
\[
\begin{tikzpicture}[use Hobby shortcut,scale=.8]
\begin{knot}[
  consider self intersections = true,
  clip width=4,
  flip crossing/.list={3,6,7,8}
]
\strand[-stealth](-3,-3.5)..(-7,-3.5);
\strand[stealth-](-4.5,-1)..(-6,-5);   
\strand[-stealth](-5.5,-1)..(-4,-5);
 \draw[<->] (-2.5,-3)..(-1.5,-3);
\strand[-stealth](3,-2.5)..(-1,-2.5);
\strand[stealth-](2,-1)..(.5,-5); 
\strand[-stealth](0,-1)..(1.5,-5);
\end{knot}
\draw[line width=2.2mm,red] (-5.15,-2)..(-4.86,-2.7);
\draw[line width=2.2mm,red] (.9,-3.3)..(1.16,-4);
\node[below] at (-3,-3.5) {\small $x$};
\node[left] at (-5.5,-1) {\small $y$};
\node[left] at (-6,-5) {\small $z$};
\node[right] at (-4.5,-1) {\small $R_2(z\bar{*}x,y)$};
\node[below] at (-4,-5) {\small $R_1(z \bar{*}x,y)*x$};
\node[below] at (3,-2.5) {\small $x$};
\node[left] at (0,-1) {\small $y$};
\node[left] at (.5,-5) {\small $z$};
\node[right] at (2,-1) {\small $R_2(z,y*x)\bar{*}x$};
\node[below] at (2.5,-5) {\small $R_1(z,y*x)$};
\end{tikzpicture}
\]
We have axioms (2), (3) in Definition~\ref{SingQdle} and (8), (9) in Definition~\ref{stuquandle}. Next we check the oriented stuck Reidemeister moves derived from $\Omega4e$.
\[
\begin{tikzpicture}[use Hobby shortcut,scale=.8]
\begin{knot}[
  consider self intersections = true,
  clip width=4,
  flip crossing/.list={7,8}
]
\strand[stealth-](-4.5,-1)..(-6,-5);   
\strand[-stealth](-5.5,-1)..(-4,-5);
\strand[-stealth](-3,-3.5)..(-7,-3.5);
 \draw[<->] (-2.5,-3)..(-1.5,-3);
\strand[stealth-](2,-1)..(.5,-5); 
\strand[-stealth](0,-1)..(1.5,-5);
\strand[-stealth](3,-2.5)..(-1,-2.5);
\end{knot}
\draw[line width=2.2mm,red] (-5.15,-2.7)..(-4.86,-2);
\draw[line width=2.2mm,red] (.9,-4)..(1.16,-3.3);
\node[below] at (-3,-3.5) {\small $x$};
\node[below] at (-7.5,-3.5) {\small $(x\bar{*}R_3(y,z))*z$};
\node[left] at (-5.5,-1) {\small $y$};
\node[left] at (-6,-5) {\small $z$};
\node[right] at (-4.5,-1) {\small $R_4(y,z)$};
\node[below] at (-4,-5) {\small $R_3(y,z)$};
\node[below] at (3,-2.5) {\small $x$};
\node[above] at (-1.5,-2.5) {\small $(x*R_4(y,z))\bar{*}y$};
\node[left] at (0,-1) {\small $y$};
\node[left] at (.5,-5) {\small $z$};
\node[right] at (2,-1) {\small $R_4(y,z)$};
\node[below] at (2.5,-5) {\small $R_3(y,z)$};
\end{tikzpicture}
\]
\[
\begin{tikzpicture}[use Hobby shortcut,scale=.8]
\begin{knot}[
  consider self intersections = true,
  clip width=4,
  flip crossing/.list={7,8}
]
\strand[stealth-](-4.5,-1)..(-6,-5);   
\strand[-stealth](-5.5,-1)..(-4,-5);
\strand[-stealth](-3,-3.5)..(-7,-3.5);
 \draw[<->] (-2.5,-3)..(-1.5,-3);
\strand[stealth-](2,-1)..(.5,-5); 
\strand[-stealth](0,-1)..(1.5,-5);
\strand[-stealth](3,-2.5)..(-1,-2.5);
\end{knot}
\draw[line width=2.2mm,red] (-5.15,-2)..(-4.86,-2.7);
\draw[line width=2.2mm,red] (.9,-3.3)..(1.16,-4);
\node[below] at (-3,-3.5) {\small $x$};
\node[below] at (-7.5,-3.5) {\small $(x\bar{*}R_1(z,y))*z$};
\node[left] at (-5.5,-1) {\small $y$};
\node[left] at (-6,-5) {\small $z$};
\node[right] at (-4.5,-1) {\small $R_2(z,y)$};
\node[below] at (-4,-5) {\small $R_1(z,y)$};
\node[below] at (3,-2.5) {\small $x$};
\node[above] at (-1.5,-2.5) {\small $(x*R_2(z,y))\bar{*}y$};
\node[left] at (0,-1) {\small $y$};
\node[left] at (.5,-5) {\small $z$};
\node[right] at (2,-1) {\small $R_2(z,y)$};
\node[below] at (2.5,-5) {\small $R_1(z,y)$};
\end{tikzpicture}
\]
We have axioms (1) in Definition~\ref{SingQdle} and (10) in Definition~\ref{stuquandle}.
\end{proof}

Next, we provide two examples of stuquandles that will be useful in distinguishing oriented stuck knots and links.

\begin{example}\label{generalizedaffinestuquandle}
Let $X=\mathbb{Z}_n$ with the quandle operation $x*y = ax+(1-a)y$, where $a$ is invertible so that $x\ \bar{*}\ y = a^{-1}x+(1-a^{-1})y$.  Now let $R_1(x,y) = bx+cy$, then by axiom~(\ref{eq4}) of Definition~\ref{SingQdle} we have $R_2(x,y) = acx + [c(1-a) + b]y$. By substituting these expressions into axiom~(\ref{eq1}) of Definition~\ref{SingQdle} we find the relation $c= 1 - b$. Substituting, we find that the following is an oriented singquandle for any invertible $a$ and any $b$ in $\mathbb{Z}_n$:
	\begin{eqnarray}
    	x*y &=& ax + (1-a)y, \label{sing1} \\
        R_1(x,y) &=& bx + (1 - b)y, \label{sing2} \\
        R_2(x,y) &=& a(1 - b)x + (1 - a(1 - b))y. \label{sing3}
    \end{eqnarray}
    Next, let $R_3(x,y) = dx+ey$, then by axiom~(\ref{eq7}) of Definition~\ref{stuquandle}, we have $R_4(x,y) =  [d(1-a) + e]x+ady$.  By substituting these expressions into axiom~(\ref{eq6}) of Definition~\ref{stuquandle} we obtain no constraints on the coefficients $d$ and $e$.
    Axiom~(\ref{eq8}) of Definition~\ref{stuquandle} imposes that $d=1-e$, while axioms~(\ref{eq9}, \ref{eq10}) of Definition~\ref{stuquandle} introduce no constraint. 
    Substituting, we obtain
\begin{eqnarray}
  R_3(x,y)&=&(1-e)x+ey,\\
  R_4(x,y)&=&(1-a(1-e))x+a(1-e)y.
\end{eqnarray}
 Therefore, $(\mathbb{Z}_n,*,R_1,R_2,R_3,R_4)$ is an oriented stuquandle where $a$ is invertible element of $\mathbb{Z}_n$ and any $b,e \in \mathbb{Z}_n$.

\end{example}



\begin{example}
Let $\Lambda=\mathbb{Z}[t^{\pm 1},v]$  and let $X$ be a $\Lambda$-module. Let
\begin{eqnarray*}
x\ast y &=& tx+(1-t)y,\\
R_1(x,y) &=& \alpha(a,b,c)x+(1-\alpha(a,b,c))y,\\
R_2(x,y) &=& t[1  - \alpha(a,b,c)] x + [1 -t(1-  \alpha(a,b,c))] y,\\
R_3(x,y) &=& (1-\alpha(d,e,f))x+\alpha(d,e,f)y,\\
R_2(x,y) &=&  [1 -t(1-  \alpha(d,e,f))] x + t[1  - \alpha(d,e,f)] y ,\\
\end{eqnarray*}
where $\alpha(a,b,c)=at+bv+ctv$ and $\alpha(d,e,f)=dt+fv+etv$. Then $(X,*,R_1,R_2,R_3,R_4)$ is an oriented stuquandle, which we call an \textit{Alexander oriented stuquandle}. The fact that $(X,*,R_1,R_2,R_3,R_4)$ is an oriented stuquandle follows from Example \ref{generalizedaffinestuquandle} by straightforward substitution. 
\end{example}

Given a finite set $X=\mathbb{Z}_n$, we can specify a stuquandle
structure on $X$ by explicitly listing the operation tables of the five
stuquandle operations. In practice, it is convenient to put the operation tables together 
into an $n\times 5n$ block matrix, see Example~\ref{block}.

\begin{example}\label{block}
Consider the stuquandle $X=\mathbb{Z}_3$ with $x \ast y=2x+2y$, $R_1(x,y)=x$, $R_2(x,y)=y$, $R_3(x,y)=x$ and $R_4(x,y)=2x+2y$. This is a stuquandle by Example~\ref{generalizedaffinestuquandle} with $a=2$, $b=1$, and $e=0$. We can also specify the stuquandle with the following operation table,
\[
\begin{array}{r|rrr} \ast & 0 & 1 & 2 \\ \hline 0 & 0 & 2 & 1 \\ 1 & 2 & 1 & 0 \\ 2 & 1 & 0 & 2\end{array}\ \ 
\begin{array}{r|rrr} R_1 & 0 & 1 & 2 \\ \hline 0 & 0 & 1 & 2 \\ 1 & 0 & 1 & 2 \\ 2 & 0 & 1 & 2\end{array}\ \ 
\begin{array}{r|rrr} R_2 & 0 & 1 & 2 \\ \hline 0 & 0 & 0 & 0 \\ 1 & 1 & 1 & 1 \\ 2& 2 & 2 & 2\end{array}\ \ 
\begin{array}{r|rrr} R_3 & 0 & 1 & 2 \\ \hline 0 & 0 & 1 & 2 \\ 1 & 0 & 1 & 2 \\ 2 & 0 & 1 & 2\end{array}\ \ 
\begin{array}{r|rrr} R_4 & 0 & 1 & 2 \\ \hline 0 & 0 & 2 & 1 \\ 1 & 2 & 1 & 0 \\ 2 & 1 & 0 & 2\end{array}
\]
and the operation tables can be encoded as the block matrix
\[
\left[\begin{array}{rrr|rrr|rrr|rrr|rrr}
0 & 2 & 1 &  0 & 1 & 2 &  0 & 0 & 0 & 0 & 1 & 2 &  0 & 2 & 1 \\\
2 & 1 & 0 &  0 & 1 & 2 &  1 & 1 & 1 &  0 & 1 & 2 &  2 & 1 & 0\\
1 & 0 & 2 &  0 & 1 & 2 &   2 & 2 & 2 & 0 & 1 & 2 &  1 & 0 & 2
\end{array}\right].
\]
\end{example}


\begin{definition}
Let $D$ be a stuck link diagram of a stuck link $L$ and let $S=\{ a_1,a_2,\dots, a_m\}$ be the set of labels of the arcs in $D$ at classical crossings and semiarcs in $D$ at stuck crossings. We define the \emph{fundamental stuquandle} of $D$ by proceeding as in classical knot theory with the fundamental quandle.

\begin{enumerate}
    \item The set of stuquandle words, $W(S)$, is recursively defined. 
    \begin{enumerate}
        \item $S \subset W(S)$,
        \item If $a_i, a_j \in W(S)$, then 
    \[ a_i*a_j, a_i \bar{*} a_j, R_1(a_i,a_j), R_2(a_i,a_j), R_3(a_i,a_j), R_4(a_i,a_j) \in W(S). \]
    \end{enumerate}
    \item The set $Y$ is the set of \emph{free stuquandle words} which are equivalent classes of $W(S)$ determined by the conditions in Definition~\ref{stuquandle}.
    \item Let $c_1,\dots, c_n$ be the crossings of $D$. Each crossing $c_i$ in $D$ determines a relation $r_i$ on the elements of $Y$.
    \item The \emph{fundamental stuquandle} of $D$, $\mathcal{STQ}(D)$, is the set of equivalence class of words in $W(S)$ determined by the stuquandle conditions and the relations given by the crossings of $D$.
\end{enumerate}
\end{definition}

\begin{theorem}
The isomorphism class $\mathcal{STQ}(L)$ of $\mathcal{STQ}(D)$ is an invariant of oriented stuck links.
\end{theorem}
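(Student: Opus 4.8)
The plan is to show that the presentation of $\mathcal{STQ}(D)$ changes only by Tietze transformations when the diagram $D$ is modified by a Reidemeister move, so that the isomorphism class of the resulting stuquandle is unchanged. Recall that $\mathcal{STQ}(D)$ is presented by generators given by the arcs of $D$ at classical crossings together with the semiarcs at stuck crossings, subject to the relations read off from each crossing via the coloring rules of Figure~\ref{newrule}. Two stuck link diagrams represent the same oriented stuck link precisely when they are related by a finite sequence of oriented classical and oriented stuck Reidemeister moves; by the generating-set result of Section~\ref{GenSet}, it suffices to verify invariance under the oriented classical Reidemeister moves of Figure~\ref{rmoves} and the stuck moves of Figures~\ref{stuck1}, \ref{stuck2}, and \ref{stuck3}.

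For a fixed move, let $D$ and $D'$ denote the diagrams before and after, agreeing outside a disk $\Delta$ in which the move is performed. The arcs and semiarcs meeting $\partial\Delta$ provide a common set of generators, and every internal generator of $D'$ can be expressed as a stuquandle word in these boundary generators using the crossing relations of $D'$ inside $\Delta$, and symmetrically for $D$. First I would define a homomorphism $\phi\colon\mathcal{STQ}(D)\to\mathcal{STQ}(D')$ sending each generator of $D$ to the corresponding word, and check that it carries every relation of $D$ to a consequence of the relations and axioms of $\mathcal{STQ}(D')$; I would then define $\psi$ in the reverse direction and verify that $\phi$ and $\psi$ are mutually inverse. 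For the classical moves this is the standard fact that the underlying quandle $(X,*)$ makes the fundamental quandle a knot invariant, so the only new content lies in the stuck moves.

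The heart of the argument is exactly the local bookkeeping already carried out in the proof of Theorem~\ref{counting}: the labelings displayed there for the $\Omega5a$-, $\Omega4a$-, and $\Omega4e$-type stuck moves exhibit, for each boundary arc, the two stuquandle words assigned by $D$ and by $D'$, and the equalities between these words are precisely the stuquandle axioms~(\ref{eq6})--(\ref{eq10}) of Definition~\ref{stuquandle} together with the singquandle axioms~(\ref{eq1})--(\ref{eq5}) of Definition~\ref{SingQdle}. These identities guarantee simultaneously that $\phi$ and $\psi$ are well defined and that they are inverse to one another. I expect the main obstacle to be the $\Omega4e$-type moves of Figure~\ref{stuck3}, where no crossing is created or destroyed but the three strands are repositioned past a stuck crossing; here there is no new internal generator to eliminate, and invariance must instead be extracted from the single relation axiom~(\ref{eq10}), so care is needed to confirm that the induced assignment is a genuine stuquandle homomorphism rather than merely a bijection of colorings. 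Once these verifications are assembled, $\mathcal{STQ}(D)\cong\mathcal{STQ}(D')$ for each generating move, and hence $\mathcal{STQ}(L)$ is well defined up to isomorphism.
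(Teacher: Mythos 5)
Your proposal is correct and takes essentially the same route as the paper: the paper's proof is the single observation that oriented stuck Reidemeister moves induce Tietze transformations on presentations of $\mathcal{STQ}(D)$, which is precisely what you verify in expanded form by reducing to the generating set of Section~\ref{GenSet} and reusing the local labelings from the proof of Theorem~\ref{counting}. Your extra care about the $\Omega 4e$-type moves is sound (though note that none of the stuck moves in Figures~\ref{stuck1}--\ref{stuck3} create or destroy crossings, so they are not special in that respect), and it simply fills in details the paper leaves implicit.
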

\begin{proof}
By construction, oriented stuck Reidemeister moves on diagrams induce Tietze transformations on presentations of $\mathcal{STQ}(D)$.
\end{proof}

\begin{example}
In this example, we compute the fundamental stuquandle of the following oriented stuck link.  Consider the stuck Hopf link with one stuck crossing and one positive classical crossing, $L$, with stuck diagram $D$, see Figure~\ref{hopf}. 
\begin{figure}[ht]
\centering
\begin{tikzpicture}[use Hobby shortcut]
diagram on the left
\begin{knot}[
clip width=4,
  flip crossing=4
]
\strand(-1,0) circle[radius=2cm];
    \draw[decoration={markings,mark=at position .03 with
    {\arrow[scale=3,>=stealth]{<}}},postaction={decorate}](-1,0) circle[radius=2cm];
\strand (1,0) circle[radius=2cm];
    \draw[decoration={markings,mark=at position .5 with
    {\arrow[scale=3,>=stealth]{>}}},postaction={decorate}] (1,0) circle[radius=2cm];
\end{knot}
\draw [red,line width=2mm,domain=40:75] plot ({2*cos(\x)-1}, {2*sin(\x)});
\node[left] at (-1,0) {\tiny $z$};
\node[left] at (-2,2) {\tiny $x$};
\node[right] at (2,2) {\tiny $y$};
\end{tikzpicture}
\vspace{.2in}
		\caption{Diagram $D$ of $L$.}
		\label{hopf}
\end{figure}
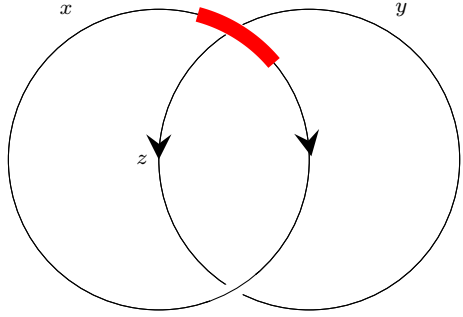
We will label the arcs of the diagram $D$ by $x,y,$ and $z$.  Then the fundamental stuquandle of $L$ is given by ,

\begin{eqnarray*}
\mathcal{STQ}(L)&=&\langle x,y,z \, \vert \, \; x=R_4(y,x), \; z=R_3(y,x), \;z*x=y\rangle\\
&=&\langle x,y\, \vert \,  R_3(y,x)*R_4(y,x)=y \rangle.
\end{eqnarray*}
\end{example}
\begin{definition}
Let $(X,*,R_1,R_2,R_3,R_4)$ and $(Y,\triangleright,S_1,S_2,S_3,S_4)$ be two stuquandles. A map $f:X\longrightarrow Y$ is a \textit{stuquandle homomorphism} if the following conditions are satisfied:
\begin{enumerate}
\item $f(x*y)=f(x)\triangleright f(y)$,
\item $f(R_1(x,y))=S_1(f(x),f(y))$,
\item $f(R_2(x,y))=S_2(f(x),f(y))$,
\item $f(R_3(x,y))=S_3(f(x),f(y))$,
\item $f(R_4(x,y))=S_4(f(x),f(y))$.
\end{enumerate}
If $f$ a bijective stuquandle homomorphism then it is called a \textit{stuquandle isomorphism}. 
\end{definition}

For any oriented stuck link $L$ with diagram $D$, there is an associated fundamental stuquandle, $(\mathcal{STQ}(L), *, R_1, R_2,R_3,R_4)$, and for a finite stuquandle, $(X, \triangleright, R_1', R_2',R_3',R_4')$, the set of stuquandle homomorphisms,
\begin{equation*}
\textup{Hom}(\mathcal{STQ}(L),X) = \left\{ f: \mathcal{STQ}(L) \rightarrow X  \, \middle \vert \, \begin{array}{l}
f( x*y) = f(x) \triangleright f(y),\\ 
f(R_1(x,y))= R_1'(f(x),f(y)),\\
f(R_2(x,y))= R_2'(f(x),f(y)),\\
f(R_3(y,x))= R_3'(f(y),f(x)),\\
f(R_4(y,x))= R_4'(f(y),f(x))
  \end{array}
\right\},     
\end{equation*}
can be used to construct computable invariants for stuck links. For example, by computing the cardinality of this set, we obtain the stuquandle counting invariant defined in Theorem~\ref{counting}. To see this, the assignment of elements of $X$ to the semiarcs in $D$ at a stuck crossing and to the arcs at the classical crossings in $D$ defines a homomorphism $f: \mathcal{STQ}(D) \rightarrow X$ if and only if the coloring conditions in Definition~\ref{colorrule} are satisfied at every stuck and classical crossing in $D$. We can compute the set, $\textup{Hom}(\mathcal{STQ}(L),X) $, by computing the colorings of $D$ by $X$. Thus, we have
\[ Col_X(L) = \vert \textup{Hom}(\mathcal{STQ}(L),X) \vert. \]

\begin{proposition}
Let $L$ be a stuck link. Let $f:(X,*,R_1,R_2,R_3,R_4)\longrightarrow (Y,*,S_1,S_2,S_3,S_4)$ be an isomorphism of stuquandles then there is a one-to-one correspondence between the spaces of colorings $\textup{Hom}(\mathcal{STQ}(L),X)$ and $\textup{Hom}(\mathcal{STQ}(L),Y)$.
\end{proposition}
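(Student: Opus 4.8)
The plan is to exhibit the one-to-one correspondence explicitly as post-composition with $f$. Define
$\Phi\colon \textup{Hom}(\mathcal{STQ}(L),X)\to \textup{Hom}(\mathcal{STQ}(L),Y)$ by $\Phi(g)=f\circ g$. This is the natural candidate: a coloring $g$ records elements of $X$ on the arcs and semiarcs of $D$ subject to the crossing relations, and applying $f$ to each such element should produce a coloring by $Y$. The entire proof is then a functoriality argument, so I expect no genuine obstacle, only bookkeeping.

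First I would check that $\Phi$ is well defined, i.e.\ that $f\circ g$ actually lies in $\textup{Hom}(\mathcal{STQ}(L),Y)$. Given $g\in\textup{Hom}(\mathcal{STQ}(L),X)$ and elements $x,y\in\mathcal{STQ}(L)$, I compute directly, using that $g$ satisfies the defining conditions into $X$ and that $f$ is a stuquandle homomorphism. For the quandle operation, $(f\circ g)(x*y)=f\bigl(g(x)*g(y)\bigr)=f(g(x))*f(g(y))=(f\circ g)(x)*(f\circ g)(y)$, and for each $R_i$ one has $(f\circ g)(R_1(x,y))=f(R_1(g(x),g(y)))=S_1(f(g(x)),f(g(y)))$, and similarly for $R_2$. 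For the stuck operations the coloring convention uses the swapped arguments $R_3(y,x),R_4(y,x)$; the same computation gives $(f\circ g)(R_3(y,x))=S_3((f\circ g)(y),(f\circ g)(x))$ and likewise for $R_4$, so the swap is carried through automatically by the stuquandle-homomorphism conditions on $f$. Hence $f\circ g$ satisfies exactly the five conditions defining $\textup{Hom}(\mathcal{STQ}(L),Y)$.

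Next I would produce the inverse. Since $f$ is a stuquandle \emph{isomorphism}, its set-theoretic inverse $f^{-1}\colon Y\to X$ is again a stuquandle homomorphism: substituting $x=f^{-1}(u)$, $y=f^{-1}(v)$ into the homomorphism identities for $f$ and applying $f^{-1}$ yields $f^{-1}(u*v)=f^{-1}(u)*f^{-1}(v)$ and $f^{-1}(S_i(u,v))=R_i(f^{-1}(u),f^{-1}(v))$ for each $i$. Therefore $\Psi(h)=f^{-1}\circ h$ defines a map $\textup{Hom}(\mathcal{STQ}(L),Y)\to\textup{Hom}(\mathcal{STQ}(L),X)$ by the same verification as above. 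Finally, associativity of composition together with $f^{-1}\circ f=\mathrm{id}_X$ and $f\circ f^{-1}=\mathrm{id}_Y$ gives $\Psi\circ\Phi=\mathrm{id}$ and $\Phi\circ\Psi=\mathrm{id}$, so $\Phi$ is a bijection. The only point requiring attention is the argument-order convention for $R_3,R_4$ in the definition of the coloring space, and as noted this is preserved without extra work; consequently there is no substantive difficulty to overcome.
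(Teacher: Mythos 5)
Your proof is correct; the paper in fact states this proposition without any proof, and your argument---post-composition with $f$ (and with $f^{-1}$ for the inverse, after verifying that the set-theoretic inverse of a bijective stuquandle homomorphism is again a stuquandle homomorphism)---is exactly the standard argument the authors leave implicit. Your observation that the swapped arguments in the $R_3$, $R_4$ conditions of the coloring space are preserved automatically (being merely a renaming of bound variables) correctly handles the only notational subtlety.
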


In the following section, we will consider stuck knots and links derived from singular knots and links known as 2-bouquet graphs of type $K$ and type $L$. We will follow the notation for 2-bouquet graphs found in \cite{O}. We will modify the notation of a 2-bouquet graph by adding $+$ or $-$ in the superscript to denote the derived oriented stuck link with a positive or negative stuck crossing. For an example see Figure~\ref{singstuck}.
\begin{figure}[ht]
\centering
\begin{tikzpicture}[use Hobby shortcut, scale=.6]
\begin{knot}[
clip width=4,
  flip crossing=4
]
\strand(-1,0) circle[radius=2cm];
    \draw[decoration={markings,mark=at position .03 with
    {\arrow[scale=3,>=stealth]{<}}},postaction={decorate}](-1,0) circle[radius=2cm];
\strand (1,0) circle[radius=2cm];
    \draw[decoration={markings,mark=at position .5 with
    {\arrow[scale=3,>=stealth]{>}}},postaction={decorate}] (1,0) circle[radius=2cm];
\end{knot}
\node at (0,1.7) [circle,fill,inner sep=4pt]{};
\end{tikzpicture}
\hspace{.5cm}
\begin{tikzpicture}[use Hobby shortcut,scale=.6]
\begin{knot}[
clip width=4,
  flip crossing=4
]
\strand(-1,0) circle[radius=2cm];
    \draw[decoration={markings,mark=at position .03 with
    {\arrow[scale=3,>=stealth]{<}}},postaction={decorate}](-1,0) circle[radius=2cm];
\strand (1,0) circle[radius=2cm];
    \draw[decoration={markings,mark=at position .5 with
    {\arrow[scale=3,>=stealth]{>}}},postaction={decorate}] (1,0) circle[radius=2cm];
\end{knot}
\draw [red,line width=2mm,domain=105:140] plot ({2*cos(\x)+1}, {2*sin(\x)});
\end{tikzpicture}
\hspace{.5cm}
\begin{tikzpicture}[use Hobby shortcut, scale=.6]
\begin{knot}[
clip width=4,
  flip crossing=4
]
\strand(-1,0) circle[radius=2cm];
    \draw[decoration={markings,mark=at position .03 with
    {\arrow[scale=3,>=stealth]{<}}},postaction={decorate}](-1,0) circle[radius=2cm];
\strand (1,0) circle[radius=2cm];
    \draw[decoration={markings,mark=at position .5 with
   {\arrow[scale=3,>=stealth]{>}}},postaction={decorate}] (1,0) circle[radius=2cm];
\end{knot}
\draw [red,line width=2mm,domain=40:75] plot ({2*cos(\x)-1}, {2*sin(\x)});
\end{tikzpicture}
\vspace{.2in}
		\caption{Oriented singular link $0^l_1$ (left), derived  oriented stuck links $0^{l+}_1$ (middle), and $0^{l-}_1$ (right).}
		\label{singstuck}
\end{figure}
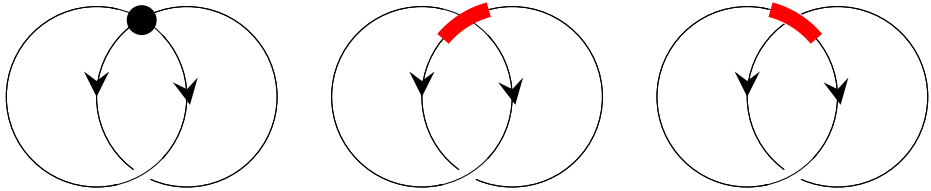

\section{Computations}\label{computations}

\begin{example}
Let $X=(\mathbb{Z}_3,*,R_1,R_2,R_3,R_4)$ be the stuquandle with operation matrix below, 

\[
\left[
\begin{array}{ccc|ccc|ccc|ccc|ccc}
 0& 0& 0    &    0& 1& 1   &   0& 2& 2   &   1& 1& 1 &    1& 1& 2   \\
 1& 1& 1    &    2& 1& 2   &   1& 1& 0   &   1& 2& 2 &    1& 2& 1   \\
 2& 2& 2    &    2& 0& 1   &   1& 2& 1   &   2& 1& 0 &    1& 2& 0  
\end{array}
\right].
\]
The link below 

\[
\begin{tikzpicture}[use Hobby shortcut]
diagram on the left
\begin{knot}[
clip width=4,
  flip crossing=2
]
\strand(-1,0) circle[radius=2cm];
    \draw[decoration={markings,mark=at position .03 with
    {\arrow[scale=3,>=stealth]{<}}},postaction={decorate}](-1,0) circle[radius=2cm];
\strand (1,0) circle[radius=2cm];
    \draw[decoration={markings,mark=at position .5 with
    {\arrow[scale=3,>=stealth]{<}}},postaction={decorate}] (1,0) circle[radius=2cm];
\end{knot}
\draw [red,line width=2mm,domain=105:135] plot ({2*cos(\x)+1}, {2*sin(\x)});
\node[left] at (-1.2,0) {\tiny $x$};
\node[right] at (1,0) {\tiny $R_3(y,x)$};
\node[left] at (-2,2) {\tiny $y$};
\node[right] at (2,2) {\tiny $R_4(y,x)$};
\end{tikzpicture}
\]
has the following coloring equations,
\begin{eqnarray*}
      x &=& R_4(y,x),\\
    y &=& R_3(y,x) *x.       
\end{eqnarray*}
Using the stuquandle defined above the system of coloring equations for $S_1$ has zero solutions. Therefore, $Col_X(S_1)=0$.
The following link 

\[
\begin{tikzpicture}[use Hobby shortcut]
diagram on the left
\begin{knot}[
clip width=4,
  flip crossing=2
]
\strand(-1,0) circle[radius=2cm];
    \draw[decoration={markings,mark=at position .03 with
    {\arrow[scale=3,>=stealth]{<}}},postaction={decorate}](-1,0) circle[radius=2cm];
\strand (1,0) circle[radius=2cm];
    \draw[decoration={markings,mark=at position .5 with
    {\arrow[scale=3,>=stealth]{<}}},postaction={decorate}] (1,0) circle[radius=2cm];
\end{knot}
\draw [red,line width=2mm,domain=45:75] plot ({2*cos(\x)-1}, {2*sin(\x)});
\node[left] at (-1.2,0) {\tiny $x$};
\node[right] at (1,0) {\tiny $R_1(x,y)$};
\node[left] at (-2,2) {\tiny $y$};
\node[right] at (2,2) {\tiny $R_2(x,y)$};
\end{tikzpicture}
\]

has the following coloring equations
\begin{eqnarray*}
      x &=& R_2(x,y),\\
    y &=& R_1(x,y) *x.       
\end{eqnarray*}
Using the stuquandle defined above the system of coloring equations for $S_2$ has two solutions. Therefore, $Col_X(S_2)=2$.
\end{example}

\begin{remark}
Notice from this example that in the case it is possible to obtain \emph{zero} colorings of a stuck knot by a stuquandle. This is impossible when considering the colorings of classical knots by quandles.
\end{remark}

In the following example we get a sense of the effectiveness of the invariant by selecting a specific stuquandle and we compute the stuquandle counting invariant for several stuck knots. Specifically, we will compare our results of the stuquandle counting invariant to the signed sticking number.
\begin{example}\label{ex7.3}
Let $X=\mathbb{Z}_{12}$ be the stuquandle with operation defined by $x \ast y = 11x+2y$ and maps $R_1(x,y) = 10x+3y$, $R_2(x,y)=9x+4y$, $R_3(x,y)=2x+11y$, and $R_4(x,y)=3x+10y$. By Example~\ref{generalizedaffinestuquandle} with $a=11$, $b=10$ and $e=11$ we obtain that $(X,*,R_1,R_2,R_3,R_4)$ is an oriented stuquandle . We compute the stuquandle counting invariant for 8 stuck knots derived from the first 4 singular knots in \cite{O}.

\begin{center}
\begin{tabular}{ c|l |l }
Signed Sticking \# & $Col_X(K)$ & $K$ \\
\hline
-1  & 24 & $0^{k-}_1$, $4^{k-}_1$ \\
    & 48 & $3^{k-}_1$\\
    & 144& $2^{k-}_1$\\
\hline    
1   & 12 &$2^{k+}_1$, $3^{k+}_1$, $4^{k+}_1$ \\
    & 36 & $0^{k+}_1$\\
\end{tabular}
\end{center}

\end{example}

\section{Invariant of Arc Diagrams}\label{app}

We will now compute the stuquandle counting invariant to investigate the topology of RNA structure. We will first consider an arc diagram of RNA folding and then use the transformation defined above to turn the arc diagram into a stuck arc diagram and then close the diagram using the self-closure to get a stuck knot diagram. We will then compute the stuquandle counting invariant using the stuck knot diagram corresponding to the arc diagram of RNA folding.

\begin{example}\label{ex8.1}
Let $X=\mathbb{Z}_{4}$ be the stuquandle with operation defined by $x \ast y = x$ and maps $R_1(x,y) =2x+3y$, $R_2(x,y)=3x+2y$, $R_3(x,y)=y$, and $R_4(x,y)=x$. By Example~\ref{generalizedaffinestuquandle} with $a=1$, $b=2$ and $e=1$ we obtain that $(X,*,R_1,R_2,R_3,R_4)$ is an oriented stuquandle. Consider the following arc diagrams of RNA foldings.


\[
\includegraphics[scale=.5]{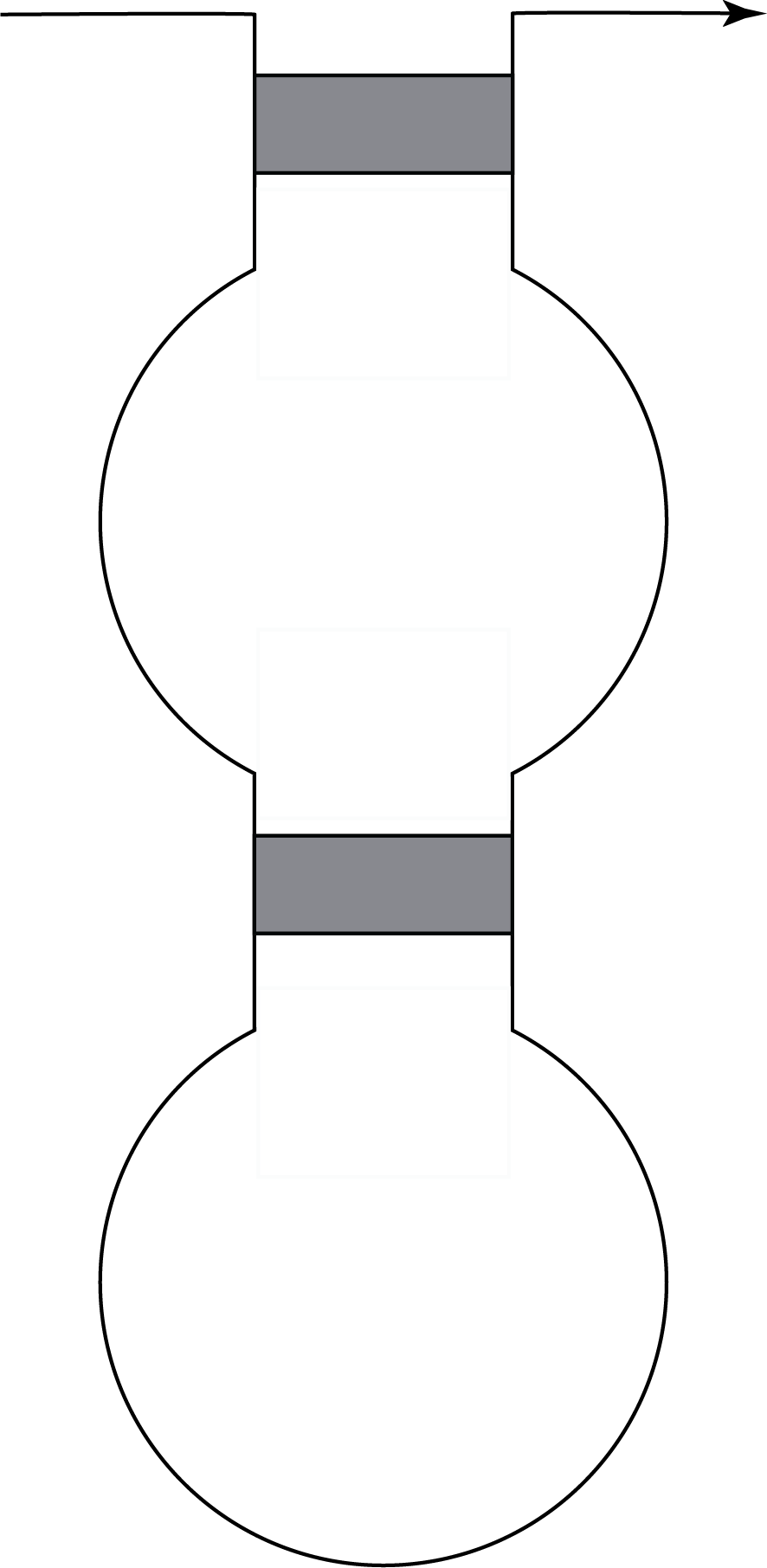}
\hspace{1cm}
\centering
        \begin{tikzpicture}[use Hobby shortcut]
\begin{knot}[
  consider self intersections = true,
  clip width=5,
  flip crossing/.list={2}
]
\strand[-stealth] ([closed]0,5)..(-1,4)..(1,2)..(-1,0)..(0,-1)..(1,0)..(-1,2)..(1,4);
\draw[decoration={markings,mark=at position .1 with
    {\arrow[scale=3,>=stealth]{>}}},postaction={decorate}]([closed]0,5)..(-1,4)..(1,2)..(-1,0)..(0,-1)..(1,0)..(-1,2)..(1,4);
\end{knot}
\draw[line width=1.5mm,red] (-.5,.69)..(.5,1.2);
\draw[line width=1.5mm,red] (-.5,2.78)..(.5,3.35);
\draw[latex'-latex', thick](-4.5,2)..(-3,2);
\node[above] at (-3.75,2) {\tiny $T$};
\node[right] at (1,0) {\small $c$};
\node[left] at (-1,4.5) {\small $b$};
\node[left] at (-1,2) {\small $a$};
\end{tikzpicture}
\]
Using the corresponding stuck knot diagram we obtain the following coloring equations,
\begin{eqnarray*}
a &=& R_3(c,R_3(b,a)),\\
b &=& R_4(b,a),\\ 
c &=& R_4(c,R_3(b,a)).
\end{eqnarray*}
From the first equation we obtain $a=a$, from the second equation we obtain $b=b$, and from the last equation we obtain $c=c$. Therefore, $Col_X \begin{pmatrix}
\includegraphics[scale=.15]{RNAFolding2.png}
\end{pmatrix}  = 64$.
\[
\includegraphics[scale=.5]{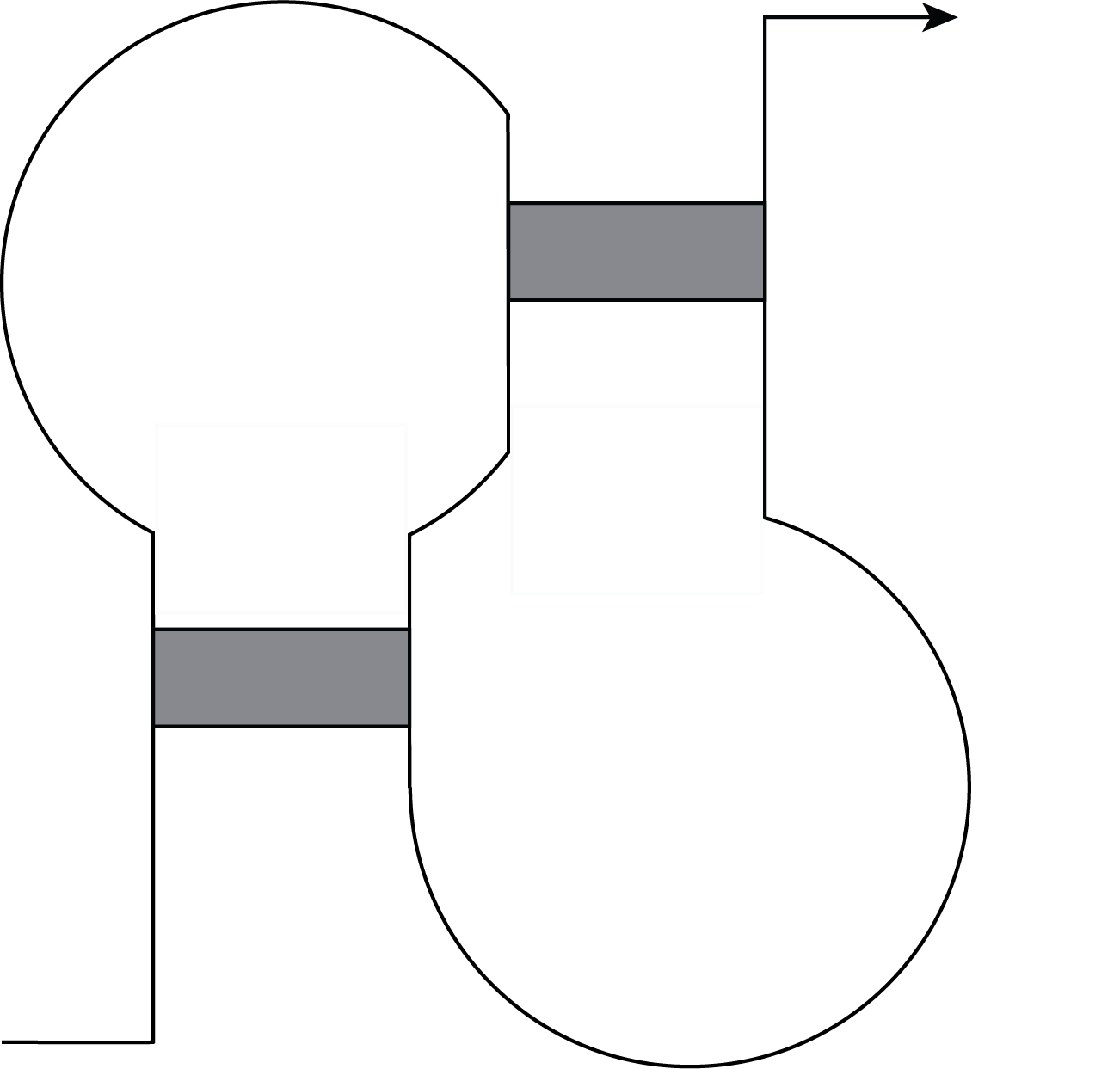}
\hspace{1cm}
        \begin{tikzpicture}[use Hobby shortcut]
\begin{knot}[
  consider self intersections = true,
  clip width=5,
  flip crossing/.list={3,5,6,7,8}
]
\strand ([closed]-2,1)..(-3,3)..(-1,4)..(1,2)..(-3,1)..(0,-1)..(1,0)..(-1,3)..(2,2)..(0,-2)..(-3,-1);
\draw[decoration={markings,mark=at position .04 with  {\arrow[scale=3,>=stealth]{>}}},postaction={decorate}]([closed]-2,1)..(-3,3)..(-1,4)..(1,2)..(-3,1)..(0,-1)..(1,0)..(-1,3)..(2,2)..(0,-2)..(-3,-1);
\end{knot}
\draw[line width=1.5mm,red] (-3,-.9)..(-2.45,.1);
\draw[line width=1.5mm,red] (-.5,1.32)..(.6,1.5);
\draw[latex'-latex', thick](-6.5,0)..(-5,0);
\node[above] at (-5.75,0) {\tiny $T$};
\node[left] at (-3,3) {\small $a$};
\node[left] at (-3,1) {\small $c$};
\node[left] at (-3,-1.5) {\small $d$};
\node[left] at (1.5,0) {\small $b$};
\end{tikzpicture}
\]
Using the corresponding stuck knot diagram we obtain the following coloring equations,

\begin{eqnarray*}
a &=& R_4(c,d) = c,\\
b &=& R_3(c,d) = d,\\
c &=& R_4(b,a) \ast a = b,\\
d &=& R_3(b,a) \ast a = a.
\end{eqnarray*}
From this system of equations we obtain $b=a$, $c=a$, $d=a$ for any $a\in \mathbb{Z}_4$, therefore, this system has 4 solutions in $\mathbb{Z}_4$. Thus, $Col_X \begin{pmatrix}
\includegraphics[scale=.15]{RNAFoldingEx.png}
\end{pmatrix}  = 4 $.  The stuck knot associated to each arc diagram above has a sticking number of $-2$. This means that the basic invariant cannot distinguish these two arc diagrams, but the stuquandle counting invariant can distinguish the two arc diagrams.

\end{example}

\section*{Acknowledgement} 
Mohamed Elhamdadi was partially supported by Simons Foundation collaboration grant 712462. The authors would like to thank Willi Kepplinger for his comments on an earlier version of this article.

\medskip

\bibliography{Ref}

@article{B,
  title={Stuck Knots},
  author={Bataineh, Khaled},
  journal={Symmetry},
  volume={12},
  number={9},
  pages={1558},
  year={2020},
  publisher={MDPI}
}

@article {BEHY,
    AUTHOR = {Bataineh, Khaled and Elhamdadi, Mohamed and Hajij, Mustafa and
              Youmans, William},
     TITLE = {Generating sets of {R}eidemeister moves of oriented singular
              links and quandles},
   JOURNAL = {J. Knot Theory Ramifications},
  FJOURNAL = {Journal of Knot Theory and its Ramifications},
    VOLUME = {27},
      YEAR = {2018},
    NUMBER = {14},
     PAGES = {1850064, 15},
      ISSN = {0218-2165},
   MRCLASS = {57M25 (57M27)},
  MRNUMBER = {3896310},
MRREVIEWER = {Takefumi Nosaka},
       DOI = {10.1142/S0218216518500645},
       URL = {https://doi-org.ez.hamilton.edu/10.1142/S0218216518500645},
}

@article {CCE1,
    AUTHOR = {Ceniceros, Jose and Churchill, Indu R. and Elhamdadi, Mohamed},
     TITLE = {Polynomial invariants of singular knots and links},
   JOURNAL = {J. Knot Theory Ramifications},
  FJOURNAL = {Journal of Knot Theory and its Ramifications},
    VOLUME = {30},
      YEAR = {2021},
    NUMBER = {1},
     PAGES = {Paper No. 2150003, 17},
      ISSN = {0218-2165},
   MRCLASS = {57K12},
  MRNUMBER = {4228564},
MRREVIEWER = {Uwe Kaiser},
       DOI = {10.1142/S0218216521500036},
       URL = {https://doi-org.ez.hamilton.edu/10.1142/S0218216521500036},
}

@article{CCE2, 
title={Singquandle shadows and singular knot invariants}, 
DOI={10.4153/S0008439521000837}, 
journal={Canadian Mathematical Bulletin}, 
publisher={Canadian Mathematical Society}, 
author={Ceniceros, Jose and Churchill, Indu R. and Elhamdadi, Mohamed}, 
year={2021}, 
pages={1–18}
}

@article {CCEH,
    AUTHOR = {Ceniceros, Jose and Churchill, Indu R. and Elhamdadi, Mohamed
              and Hajij, Mustafa},
     TITLE = {Cocycle invariants and oriented singular knots},
   JOURNAL = {Mediterr. J. Math.},
  FJOURNAL = {Mediterranean Journal of Mathematics},
    VOLUME = {18},
      YEAR = {2021},
    NUMBER = {5},
     PAGES = {Paper No. 217, 17},
      ISSN = {1660-5446},
   MRCLASS = {57K12 (05A15 05C38)},
  MRNUMBER = {4309525},
MRREVIEWER = {Emanuele Zappala},
       DOI = {10.1007/s00009-021-01867-6},
       URL = {https://doi-org.ez.hamilton.edu/10.1007/s00009-021-01867-6},
}

@book {EN,
    AUTHOR = {Elhamdadi, Mohamed and Nelson, Sam},
     TITLE = {Quandles---an introduction to the algebra of knots},
    SERIES = {Student Mathematical Library},
    VOLUME = {74},
 PUBLISHER = {American Mathematical Society, Providence, RI},
      YEAR = {2015},
     PAGES = {x+245},
      ISBN = {978-1-4704-2213-4},
   MRCLASS = {57M27 (57M25 57Q45)},
  MRNUMBER = {3379534},
MRREVIEWER = {Frederick Norwood},
       DOI = {10.1090/stml/074},
       URL = {https://doi-org.ez.hamilton.edu/10.1090/stml/074},
}

@article {Joyce,
    AUTHOR = {Joyce, David},
     TITLE = {A classifying invariant of knots, the knot quandle},
   JOURNAL = {J. Pure Appl. Algebra},
  FJOURNAL = {Journal of Pure and Applied Algebra},
    VOLUME = {23},
      YEAR = {1982},
    NUMBER = {1},
     PAGES = {37--65},
      ISSN = {0022-4049},
   MRCLASS = {57M25 (20F29 20N05 53C35)},
  MRNUMBER = {638121},
MRREVIEWER = {Mark E. Kidwell},
       DOI = {10.1016/0022-4049(82)90077-9},
       URL = {https://doi-org.ez.hamilton.edu/10.1016/0022-4049(82)90077-9},
}

@article {K,
author = {Louis H. Kauffman},
title = {State models and the jones polynomial},
journal = {Topology},
volume = {26},
number = {3},
pages = {395-407},
year = {1987},
issn = {0040-9383},
doi = {https://doi.org/10.1016/0040-9383(87)90009-7},
url = {https://www.sciencedirect.com/science/article/pii/0040938387900097},
}

@article {Matveev,
    AUTHOR = {Matveev, S. V.},
     TITLE = {Distributive groupoids in knot theory},
   JOURNAL = {Mat. Sb. (N.S.)},
  FJOURNAL = {Matematicheski\u{\i} Sbornik. Novaya Seriya},
    VOLUME = {119(161)},
      YEAR = {1982},
    NUMBER = {1},
     PAGES = {78--88, 160},
      ISSN = {0368-8666},
   MRCLASS = {57M25 (20L15)},
  MRNUMBER = {672410},
MRREVIEWER = {Jonathan A. Hillman},
}

@article {O,
    AUTHOR = {Oyamaguchi, Natsumi},
     TITLE = {Enumeration of spatial 2-bouquet graphs up to flat vertex
              isotopy},
   JOURNAL = {Topology Appl.},
  FJOURNAL = {Topology and its Applications},
    VOLUME = {196},
      YEAR = {2015},
    NUMBER = {part B},
     PAGES = {805--814},
      ISSN = {0166-8641},
   MRCLASS = {57M15 (57M27)},
  MRNUMBER = {3431017},
MRREVIEWER = {Elena Pavelescu},
       DOI = {10.1016/j.topol.2015.05.049},
       URL = {https://doi-org.ez.hamilton.edu/10.1016/j.topol.2015.05.049},
}

@article {P,
    AUTHOR = {Polyak, Michael},
     TITLE = {Minimal generating sets of {R}eidemeister moves},
   JOURNAL = {Quantum Topol.},
  FJOURNAL = {Quantum Topology},
    VOLUME = {1},
      YEAR = {2010},
    NUMBER = {4},
     PAGES = {399--411},
      ISSN = {1663-487X},
   MRCLASS = {57M25 (57M27)},
  MRNUMBER = {2733246},
       DOI = {10.4171/QT/10},
       URL = {https://doi-org.ez.hamilton.edu/10.4171/QT/10},
}

@incollection {KM,
    AUTHOR = {Kauffman, Louis H. and Magarshak, Yuri B.},
     TITLE = {Vassiliev knot invariants and the structure of {RNA} folding},
 BOOKTITLE = {Knots and applications},
    SERIES = {Ser. Knots Everything},
    VOLUME = {6},
     PAGES = {343--394},
 PUBLISHER = {World Sci. Publ., River Edge, NJ},
      YEAR = {1995},
   MRCLASS = {57M25 (92D20)},
  MRNUMBER = {1364195},
MRREVIEWER = {Alexander I. Barvinok},
       DOI = {10.1142/9789812796189\_0009},
       URL = {https://doi-org.ez.hamilton.edu/10.1142/9789812796189_0009},
}

@article {V,
    AUTHOR = {Vasiliev, V. A.},
     TITLE = {Cohomology of spaces of knots},
   JOURNAL = {Akad. Nauk SSSR Inst. Prikl. Mat. Preprint},
  FJOURNAL = {Akademiya Nauk SSSR. Institut Prikladno\u{\i} Matematiki. Preprint},
      YEAR = {1990},
    NUMBER = {91},
     PAGES = {29},
   MRCLASS = {57M25 (55T99)},
  MRNUMBER = {1122887},
MRREVIEWER = {Marko Kranjc},
}

@article {TLKL,
    AUTHOR = {Tian, Wei and Lei, Xue and Kauffman, Louis H. and Liang, Jie},
     TITLE = {A knot polynomial invariant for analysis of topology of {RNA}
              stems and protein disulfide bonds},
   JOURNAL = {Mol. Based Math. Biol.},
  FJOURNAL = {Molecular Based Mathematical Biology},
    VOLUME = {5},
      YEAR = {2017},
     PAGES = {21--30},
      ISSN = {2299-3266},
   MRCLASS = {92D20},
  MRNUMBER = {3641699},
       DOI = {10.1515/mlbmb-2017-0002},
       URL = {https://doi-org.ez.hamilton.edu/10.1515/mlbmb-2017-0002},
}

@article{
SRMOS,
author = {Joanna I. Sulkowska  and Eric J. Rawdon  and Kenneth C. Millett  and Jose N. Onuchic  and Andrzej Stasiak },
title = {Conservation of complex knotting and slipknotting patterns in proteins},
journal = {Proceedings of the National Academy of Sciences},
volume = {109},
number = {26},
pages = {E1715-E1723},
year = {2012},
doi = {10.1073/pnas.1205918109},
URL = {https://www.pnas.org/doi/abs/10.1073/pnas.1205918109},
eprint = {https://www.pnas.org/doi/pdf/10.1073/pnas.1205918109},
abstract = {While analyzing all available protein structures for the presence of knots and slipknots, we detected a strict conservation of complex knotting patterns within and between several protein families despite their large sequence divergence. Because protein folding pathways leading to knotted native protein structures are slower and less efficient than those leading to unknotted proteins with similar size and sequence, the strict conservation of the knotting patterns indicates an important physiological role of knots and slipknots in these proteins. Although little is known about the functional role of knots, recent studies have demonstrated a protein-stabilizing ability of knots and slipknots. Some of the conserved knotting patterns occur in proteins forming transmembrane channels where the slipknot loop seems to strap together the transmembrane helices forming the channel. Because knotting is an impediment to the efficient and reproducible folding needed to achieve the native protein structure (4, 5), we conclude that the observed knotting patterns are due to strong positive evolutionarily selection. We believe that it is rather unlikely that knots and slipknots are merely fortuitously formed and retained structures that bring catalytic amino acids into the right positions to enable catalysis. Rather, our results suggest that it is the knotting and slipknotting, per se, that is important for the function of these proteins and may provide them with some special properties that are yet not fully understood. To shed light on the function of knots and slipknots in proteins, we investigated the extent to which the precise location and relative positions of protein portions forming knotted domains are conserved during the evolution. To do this, we searched for knots and slipknots in all sufficiently complete protein structures deposited in the Protein Data Bank. To analyze the architecture of the knotted regions, we used a mathematical approach that applies an unbiased closure method (1) to every subchain within the given protein (2). This approach permitted us to map the precise positions of knots and slipknots in the analyzed protein structures. We observed a strong conservation of knot and slipknot architectures in several families of orthologous proteins, which maintain the same function in different species. We observed, for example, that ubiquitin C-terminal hydrolases originating from Plasmodium falciparum or Homo sapiens possess very similar knotting architectures despite their evolutionary separation by more than a billion years and despite the fact that their sequences are only 28\% identical (3). Fig.\&nbsp;P1 shows a matrix presentation of the knotting architecture of these two proteins. We observed a similarly high conservation of knotting architectures within several families of proteins forming transmembrane channels. The slipknot loop in all of these families seems to strap together several transmembrane helices, which most likely stabilizes the structure of the transmembrane channel.Fig. P1.The conservation of complex knotting pattern in ubiquitin C-terminal hydrolases from H. sapiens and P. falciparum. The matrix presentation of protein knotting shows the knot type formed by subchains delimited by the corresponding amino acid residue positions indicated on the horizontal and vertical axis. Each of the complete protein chains forms a knot with five crossings, known as the 52\&nbsp;knot. Clipping a few amino acid residues from the N\&nbsp;terminus unknots each of the proteins. Progressive truncation from the C\&nbsp;terminus results first in a fragment that is unknotted when considered as a whole, but its shorter subchains can form 31\&nbsp;knots. To understand this matrix presentation of protein knotting, the entire polypeptide chain, unfolded for this purpose, is presented along the diagonal of the matrices. The corresponding regions of knots and slipknots are indicated. The strict conservation of the knotting pattern is obvious despite the fact that these two proteins share only 28\% sequence identity. The majority of proteins fold into their native structure in such a way that their polypeptide chain remains unknotted. That is, if one would hold the two ends of the polypeptide chain and move them apart, the chain would assume an unentangled linear configuration. Some proteins, however, fold in such a way that their polypeptide chains become knotted or form slipknots. Slipknots are entanglements containing a knotted region, but if the two ends of the slipknot are pulled apart one obtains an unknotted configuration, similar to what happens with bows in shoelace knots. The function of knots and slipknots in proteins is largely unknown. In particular, it is not known whether they are essential for function or are unnecessary features acquired by chance and then preserved during the evolution. For example, the knotting of a polypeptide chain could have fortuitously brought some amino acid residues into a distance and geometry that are favorable for a particular catalytic reaction, whereas a nearly identical distance and geometry could have been equally well attained using folds that do not require protein knotting or slipknotting. By comparing the sequence and structure of proteins that serve the same role in evolutionarily distant species, such as humans and yeasts, one finds protein regions that are strongly conserved and others that are highly divergent. Strongly conserved regions are essential for function, and studying them helps us to understand the details of molecular mechanisms involved in enzymatic catalysis or binding to other molecules, such as nucleic acids.}}

@article{MWT,
title = {Circuit Topology of Proteins and Nucleic Acids},
journal = {Structure},
volume = {22},
number = {9},
pages = {1227-1237},
year = {2014},
issn = {0969-2126},
doi = {https://doi.org/10.1016/j.str.2014.06.015},
url = {https://www.sciencedirect.com/science/article/pii/S0969212614002093},
author = {Alireza Mashaghi and Roeland J. van Wijk and Sander J. Tans},
abstract = {Folded biomolecules display a bewildering structural complexity and diversity. They have therefore been analyzed in terms of generic topological features. For instance, folded proteins may be knotted, have beta-strands arranged into a Greek-key motif, or display high contact order. In this perspective, we present a method to formally describe the topology of all folded linear chains and hence provide a general classification and analysis framework for a range of biomolecules. Moreover, by identifying the fundamental rules that intrachain contacts must obey, the method establishes the topological constraints of folded linear chains. We also briefly illustrate how this circuit topology notion can be applied to study the equivalence of folded chains, the engineering of artificial RNA structures and DNA origami, the topological structure of genomes, and the role of topology in protein folding.}
}
\bibliographystyle{plain}

\end{document}